\crefname{hypothesis}{Hypothesis}{Hypotheses}
\crefname{fact}{Fact}{Facts}
\title{Deep Eigenspace Network for Parametric Non-self-adjoint Eigenvalue Problems\thanks{Submitted to the editors \textcolor{black}{DATE}.
\funding{\textcolor{black}{JS was partially supported by NSF Grant DMS-2109949 and SIMONS Foundation Collaboration Grant 711922.  ZZ was partially supported by the National Natural Science Foundation of China  (Project 92470103), the Hong Kong RGC grant (Projects 17304324 and 17300325), the Seed Funding Programme for Basic Research (HKU), and the Hong Kong RGC Research Fellow Scheme 2025.  }}}}
\author{Haoqian Li\thanks{Department of Mathematics, The University of Hong Kong, Pokfulam Road, Hong Kong SAR, China. 
  \email{lihaoqianlhq@connect.hku.hk}.}
\and Jiguang Sun\thanks{Corresponding author. Department of Mathematical Sciences, Michigan Technological University, Houghton, MI 49931, U.S.A. 
  \email{jiguangs@mtu.edu}.}
\and Zhiwen Zhang\thanks{Corresponding author. Department of Mathematics, The University of Hong Kong, Pokfulam Road, Hong Kong SAR, China. AND Materials Innovation Institute for Life Sciences and Energy (MILES), HKU-SIRI, Shenzhen, P.\,R. China.
  \email{zhangzw@hku.hk}}}
\begin{document}
	
	\maketitle
	
	% REQUIRED
	\begin{abstract}
        We consider operator learning for efficiently solving parametric non-self-adjoint eigenvalue problems. To overcome the spectral instability and mode switching associated with non-self-adjoint operators, we choose to learn the eigenspace rather than individual eigenfunctions. In particular, we propose a Deep Eigenspace Network (DEN) architecture integrating Fourier Neural Operators, geometry-adaptive POD bases, and explicit banded cross-mode mixing mechanism to capture complex spectral dependencies. We apply DEN to the non-self-adjoint Steklov eigenvalue problem and prove the Lipschitz continuity of the eigenspace with respect to the parameter. Furthermore, we derive error bounds for the eigenvalues. Numerical experiments validate that DEN is highly effective and efficient.
	\end{abstract}
	
	% REQUIRED
	\begin{keywords}
		Neural operator, non-self-adjoint eigenvalue problems, deep eigenspace network (DEN), finite element.
	\end{keywords}
	
	% % REQUIRED
	% \begin{MSCcodes} 

	% 	%35J15  	Second-order elliptic equations
	% 	%68T07  	Artificial neural networks and deep learning
	% 	%65N25  	Numerical methods for eigenvalue problems for boundary value problems involving PDEs
	% 	%65T50  	Numerical methods for discrete and fast Fourier transforms 
	% \end{MSCcodes}

    \section{Introduction}

    Deep learning approaches have gained increasing attention for solving eigenvalue problems of partial differential equations. Early efforts primarily focused on the construction of neural networks to solve a single, often high-dimensional problem. Approaches based on the Deep Ritz method, semigroup approaches, Tensor Neural Networks, and neural network-based supspace method have been developed to approximate eigenpair(s) \cite{han2020solving, li2022semigroup, ji2024deep, wang2024computing, ben2023deep, dai2024subspace}. %Recently, Dai et al. \cite{dai2024subspace} obtained accurate results for elliptic eigenvalue problems by integrating neural networks and classical subspace methods.  
    In contrast, operator learning enables the prediction of spectral properties for a class of parameters \cite{li2025operator}. Once trained, the neural operator provides real-time inference, a desired property in many-query tasks such as inverse problems. We consider operator learning of the parameter-to-solution mapping for non-self-adjoint eigenvalue problems. Compared to the self-adjoint case, the non-self-adjoint nature leads to a fundamental challenge: direct regression of individual eigenfunctions is numerically intractable due to the inherent spectral instability. According to matrix perturbation theory \cite{stewart1990matrix}, spectral clustering and mode crossings cause individual eigenvectors to rotate rapidly or switch indices, rendering the mapping from parameters to eigenfunctions highly non-smooth or discontinuous. To overcome this challenge, we pivot to learning the eigenspace, which remains stable even when individual eigenfunctions within the cluster fluctuate. 
    %By designing a neural operator to predict the eigenspace and utilizing the Rayleigh-Ritz procedure, our approach synergizes the learnability of continuous subspace mappings with the robustness from linear algebra.

    In this paper, we propose a \textbf{Deep Eigenspace Network (DEN)} to predict eigenspaces using a geometry-adaptive basis that naturally handles unstructured data, followed by the derivation of a Proper Orthogonal Decomposition (POD) basis from the output snapshots. % We prove the optimal approximation property of the POD basis. 
    To model the spectral couplings, we propose a spectral convolution layer with explicit cross-mode connections, parameterized via low-rank banded matrices. This enables DEN to capture intricate off-diagonal dependence and maintain the computational efficiency. Such inter-modal dependence extends beyond the diagonal structure of standard spectral layers. Furthermore, DEN accommodates the unstructured discretizations by the Finite Element Methods in the training data while retaining global receptive field advantage over local message-passing Graph Neural Networks (GNNs) \cite{corso2024graph}. An essential feature of DEN is its generalization of spectral transform beyond the uniform-grid restriction.

    DEN belongs into the burgeoning operator learning paradigms \cite{lu2019deeponet, li2020neural, li2020multipole, liu2024render, wang2022deepparticle, cheng2025podno, loeffler2024graph}. There exist few works on operator learning for spectral problems. Li et al. \cite{li2025operator} utilized Fourier Neural Operators (FNO) and CNNs to predict the eigenvalues and eigenfunctions of the Laplacian on variable domains. Chang et al. \cite{chang2024neural} employed PINNs to track the continuous evolution of eigenfunctions with respect to shape deformation. However, these works either address only the self-adjoint problems or rely on physics-informed loss functions that face challenges in the non-self-adjoint case. 
    %DEN takes advantage of the eigenspaces and leverages the learning ability of neural operators.
    
    % the Fourier Neural Operator (FNO) \cite{li2020fourier} has garnered significant attention for its computational efficiency and its exceptional ability to capture non-local dependencies via spectral convolutions. However, applying FNO to the non-selfadjoint eigenvalue problem on unstructured meshes necessitates two key architectural challenges: the specific mathematical structure of the operator and the domain geometry.

    % First, in contrast to translation-invariant operators where frequency modes are approximately decoupled, non-selfadjoint operators are characterized by intrinsic spectral couplings. To capture these complex inter-modal dependencies, which extend beyond the diagonal structure of standard spectral layers, we identify the need for a mechanism that explicitly models the interaction between distinct frequency modes. Second, to accommodate the non-uniform discretizations typical of Finite Element Methods (FEM) while retaining global receptive field advantage over local message-passing Graph Neural Networks (GNNs)\cite{corso2024graph}, it is essential to generalize the spectral transform beyond the uniform-grid restrictions of the FFT.

    We implement DEN for the parametric non-self-adjoint Steklov eigenvalue problem  \cite{kuznetsov2014legacy, canavati1979discontinuous, cao2013multiscale, kuznetsov2014legacy}.  Various discretization techniques have been proposed for the Steklov eigenvalue problem, including isoparametric finite element methods (FEM) \cite{andreev2004isoparametric}, virtual element methods \cite{mora2015virtual}, non-conforming FEM \cite{russo2011posteriori}, spectral-Galerkin methods \cite{an2016highly}, and advanced adaptive or multilevel schemes \cite{bi2016adaptive, han2015multilevel}. The majority of these contributions consider the self-adjoint case. The non-self-adjoint Steklov eigenvalue problem, which arises in the modeling of inhomogeneous absorbing media \cite{cakoni2016stekloff}, remains under-explored numerically.  Bramble and Osborn \cite{bramble1972approximation} investigated the problem assuming the uniform ellipticity, but did not provide numerical realizations.  Discretization of non-self-adjoint operators leads to non-Hermitian generalized eigenvalue problems (GEPs) \cite{liu2019spectral}, which are difficult to solve \cite{saad2011numerical}. 
    
    % Although iterative methods such as the Arnoldi, Lanczos, and Jacobi-Davidson algorithms \cite{bai2000templates} are available, they often suffer from slow convergence and high memory consumption, and their performance is "far from satisfactory" \cite{saad2011numerical}, especially when the distribution of eigenvalues is unknown a priori. Recently, Liu et al. \cite{liu2019spectral} applied the Spectral Indicator Method (SIM) to compute non-selfadjoint Steklov eigenvalues, providing a rigorous tool to calculate eigenvalues in specific regions of the complex plane.

    The remainder of this paper is organized as follows. In Section \ref{sec:DEN}, we introduce the architecture of DEN. Section \ref{sec:Method} presents the specific subspace learning framework for the parametric Steklov eigenvalue problem, including the loss formulation on the Grassmann manifold and the Rayleigh-Ritz procedure. In Section \ref{sec:Theory}, we provide a rigorous analysis regarding the stability of the eigenspace with respect to parameter perturbations and derive error bounds for the predicted eigenvalues. Section \ref{sec:Exp} reports numerical experiments to demonstrate the accuracy, efficiency, and robustness of DEN. Finally, Section \ref{sec:Concl} contains conclusions and future work.

    \section{Deep Eigenspace Network}\label{sec:DEN}
    In this section, we present the structure of DEN (see Fig.~\ref{fig:DEN}), discussing the cross-mode mixing operator, basis selection, and banded low-rank parametrization.
    
    % we begin with an overview of the Fourier Neural Operator (FNO) and discuss the limitations of FNO for eigenspace prediction. This serves as the starting point for DEN, which introduces multi-faceted improvements to enhance performance .

    \begin{figure}[htbp]
        \centering
        \includegraphics[width=1\textwidth]{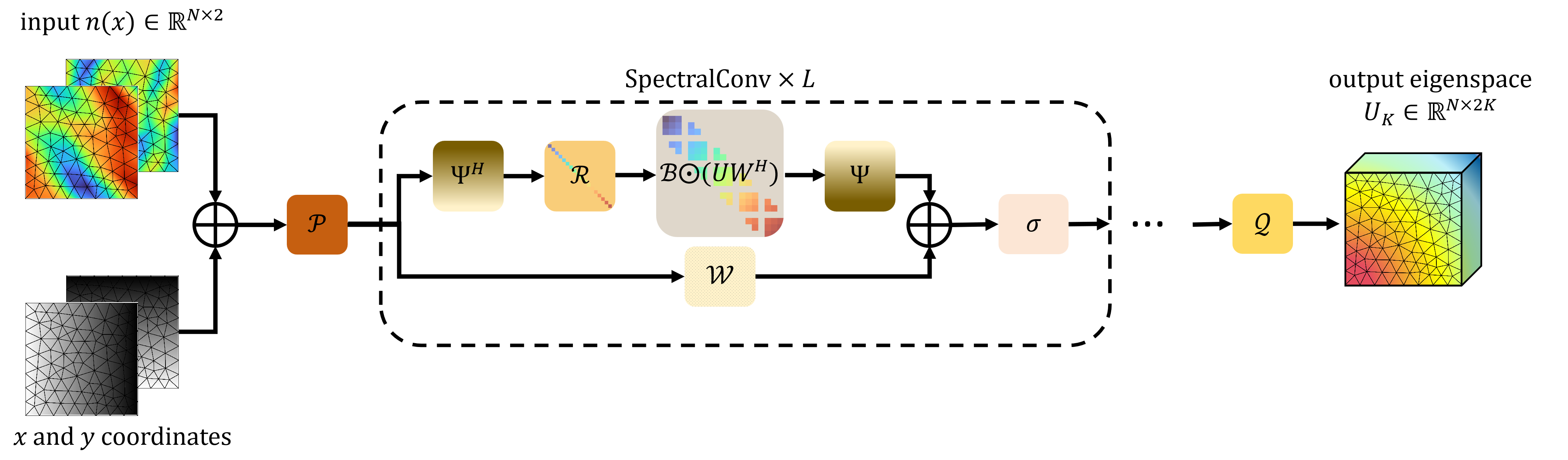}
        \caption{Structure of Deep Eigenspace Network.}
        \label{fig:DEN}
    \end{figure}

    \subsection{Structure of DEN}

    Let $\mathbf{n} \in \mathbb{R}^{N \times d_{\text{in}}}$ denote the discretized input function defined on an unstructured mesh with $N$ nodes (e.g., the variable coefficient $n(x)$), and let $\mathbf{y} \in \mathbb{R}^{N \times d_{\text{out}}}$ denote the target output (e.g., the eigenspace). Building on the FNO framework, DEN consists of three components:
    
    \begin{enumerate}
        \item \textbf{Lifting Layer} $\mathcal{P}$: A pointwise linear transformation that maps the input into a higher-dimensional latent representation:
        \begin{equation}
            \mathbf{z}^{(0)} = \mathcal{P}(\mathbf{n}) \in \mathbb{R}^{N \times d_{0}}.
        \end{equation}
    
        \item \textbf{Iterative Spectral Convolution Layers}: The latent representation is processed through a sequence of $L$ spectral convolution blocks. For $l = 0, \dots, L-1$:
        \begin{equation}\label{eq:DEN_layer}
            \mathbf{z}^{(l+1)} = \sigma \left(
            \Psi \, M_{BLR} \left( R \cdot \Psi^{H} \mathbf{z}^{(l)} \right)
            + \mathcal{W} \mathbf{z}^{(l)} + \mathbf{b}^{(l)}
            \right),
        \end{equation}
        where $\Psi \in \mathbb{R}^{N \times K}$ is a geometry-adaptive orthogonal basis, $\mathcal{W}$ is a pointwise linear operator in the physical space, $\mathbf{b}^{(l)}$ is a learnable bias, and $\sigma$ is a nonlinear activation function (e.g., GeLU). The two spectral operators $R$ and $M_{BLR}$ act sequentially in the generalized coefficient space.
        \begin{itemize}
            \item The \textbf{channel mixing operator} $R$ mixes information across input channels independently for each mode. Denoting $\hat{\mathbf{z}} =
            \Psi^H \mathbf{z}^{(l)} \in \mathbb{C}^{K \times d_l}$, its output is:
            \begin{equation}\label{eq:channel_mix}
                (R \cdot \hat{\mathbf{z}})_{c_{\text{out}}, m} =
                \sum_{c_{\text{in}}=1}^{d_l} R_{c_{\text{in}}, c_{\text{out}}, m}
                \, \hat{\mathbf{z}}_{c_{\text{in}}, m},
            \end{equation}
            where $R \in \mathbb{C}^{d_l \times d_{l+1} \times K}$ is a learnable complex-valued tensor, and mixing is performed per mode without cross-mode interaction.
            \item The \textbf{cross-mode mixing operator} $M_{BLR}$ then couples different spectral modes independently for each output channel:
            \begin{equation}\label{eq:mode_mix}
                (M_{BLR} \, \hat{\mathbf{z}})_{c_{\text{out}}} =
                M_{BLR} \cdot \hat{\mathbf{z}}_{c_{\text{out}}},
            \end{equation}
            where $M_{BLR} \in \mathbb{C}^{K \times K}$ is a shared banded low-rank matrix applied uniformly across channels (detailed in Sections~\ref{sec:ModeMix} and \ref{sec:BandLowRank}).
        \end{itemize}
        The summation of the spectral branch and the local residual branch ($\mathcal{W}$) enables each block to integrate global spectral interactions with local spatial features.
    
        \item \textbf{Projection Layer} $\mathcal{Q}$: A pointwise linear map that projects
        the latent features back to the target dimension, yielding the output
        $\mathbf{y} = \mathcal{Q}(\mathbf{z}^{(L)}) \in \mathbb{R}^{N \times d_{\text{out}}}$.
    \end{enumerate}
    
    The spectral convolution layer of DEN incorporates two key innovations.
    
    \paragraph{a. Cross-Mode Mixing for Spectral Couplings}
    Standard FNO treats each Fourier mode independently in the spectral convolution layers (cf.\ Eq.~\eqref{eq:channel_mix}), which is effective for translation-invariant operators where modes are approximately decoupled. Non-self-adjoint operators, however, are characterized by strong interactions between different eigenmodes. DEN explicitly models these off-diagonal dependencies by introducing the cross-mode mixing operator $M_{BLR}$ (cf.\ Eq.~\eqref{eq:mode_mix}), parameterized as a banded low-rank matrix to balance representational capacity with computational efficiency.
    
    \paragraph{b. Geometry-Adaptive Spectral Basis}
    Standard FNO is built on the discrete Fourier transform, which is tailored to structured Cartesian grids. To accommodate unstructured meshes arising from finite element discretizations \cite{SunZhou2016}, DEN replaces the fixed sinusoidal basis with a geometry-adaptive orthogonal basis $\Psi$, following the Generalized Spectral Operator framework \cite{cheng2025podno, loeffler2024graph}. This retains the global receptive field of spectral methods while naturally handling arbitrary geometries. As shown in Section~\ref{sec:basis}, the Proper Orthogonal Decomposition (POD) basis derived from the snapshot matrix of the target outputs constitutes the optimal choice for $\Psi$, effectively aligning the network's latent representation with the intrinsic manifold of the eigenfunctions.

    \subsection{Cross-Mode Mixing}\label{sec:ModeMix}
    Let \(\Psi\) be an orthogonal transformation basis and \(A\) be the target linear operator acting on the physical domain. The spectral representation is given by a transformation \(A_{\Psi} := \Psi^{-1} A \Psi\). For PDEs involving variable coefficients, complex boundary conditions, or non-translation-invariant phenomena, \(A_{\Psi}\) is generally \emph{non-diagonal}. Energy spreads across frequencies, inducing non-trivial cross-mode coupling.

    %While channel mixing remains specific to the architecture, we focus here on the spectral mixing. 
    To address the limitation of diagonal approximations, we introduce a \emph{cross-mode mixing operator}. Let \(M \in \mathbb{C}^{K \times K}\) be a matrix that couples different modes.
    The enhanced spectral convolution becomes:
    \begin{equation}
    \mathbf{y} = \Psi\, M\, (R\cdot \Psi^{H} \mathbf{x}).
    \end{equation}
    This formulation expands the hypothesis space from diagonal operators to the full space of linear operators on the span of \(\Psi\). The following proposition claims that, for a finite-dimensional non-self-adjoint operator, adding cross-mode interaction is necessary to eliminate representation error.
    
    \begin{proposition}[Irreducible Error of Diagonal Approximations]
    \label{prop:spectral_capacity}
    Let \(A_{\Psi} \in \mathbb{C}^{K \times K}\) be the spectral representation of a non-Hermitian $A$. Let \(\mathcal{D} \subset \mathbb{C}^{K \times K}\) be the subspace of diagonal matrices, and let \(\mathcal{M} = \mathbb{C}^{K \times K}\) be the space of arbitrary matrices.
    Define the approximation errors for the diagonal and full models as:
    \begin{equation}
    \varepsilon_{\mathcal{D}} = \min_{D \in \mathcal{D}} \| A_{\Psi} - D \|_F, \quad \text{and} \quad \varepsilon_{\mathcal{M}} = \min_{M \in \mathcal{M}} \| A_{\Psi} - M \|_F,
    \end{equation}
    where \(\| \cdot \|_F\) denotes Frobenius norm. It holds that
    \begin{equation}
    \varepsilon_{\mathcal{D}} > 0 = \varepsilon_{\mathcal{M}}.
    \end{equation}
    \end{proposition}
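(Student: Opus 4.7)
The plan is to handle the two optimization problems separately, since they decouple cleanly in the Frobenius norm. The full-matrix case is immediate: since $\mathcal{M}$ contains $A_\Psi$ itself, choosing $M = A_\Psi$ yields $\|A_\Psi - M\|_F = 0$, which is clearly the minimum, so $\varepsilon_{\mathcal{M}} = 0$.

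For the diagonal case, I would exploit the fact that the Frobenius inner product makes the subspace of diagonal matrices and the subspace of hollow (strictly off-diagonal) matrices mutually orthogonal. Write $A_\Psi = \operatorname{diag}(A_\Psi) + \operatorname{off}(A_\Psi)$, where $\operatorname{diag}(A_\Psi)$ retains only the diagonal entries and $\operatorname{off}(A_\Psi)$ retains only the off-diagonal ones. For any $D \in \mathcal{D}$, the difference $A_\Psi - D$ decomposes as $(\operatorname{diag}(A_\Psi) - D) + \operatorname{off}(A_\Psi)$, where the first summand is diagonal and the second is hollow. By the Pythagorean identity in the Frobenius norm,
\begin{equation}
\|A_\Psi - D\|_F^2 = \|\operatorname{diag}(A_\Psi) - D\|_F^2 + \|\operatorname{off}(A_\Psi)\|_F^2.
\end{equation}

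The second term is independent of $D$, so the minimum over $\mathcal{D}$ is attained at $D^\star = \operatorname{diag}(A_\Psi)$, giving $\varepsilon_{\mathcal{D}} = \|\operatorname{off}(A_\Psi)\|_F$. By assumption $A_\Psi$ has at least one nonzero off-diagonal entry, hence $\varepsilon_{\mathcal{D}} > 0 = \varepsilon_{\mathcal{M}}$, which is the desired conclusion. The identification of $\varepsilon_{\mathcal{D}}$ with the energy of the off-diagonal terms directly yields the interpretive statement at the end of the proposition.

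There is no real obstacle here; the only step that requires any care is justifying the orthogonality decomposition, which follows from the entrywise definition of $\langle X, Y\rangle_F = \sum_{i,j} \overline{X_{ij}} Y_{ij}$, since a diagonal matrix and a hollow matrix share no common nonzero index pair. I would state this explicitly as a one-line observation rather than treating it as a lemma.
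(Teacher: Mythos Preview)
Your proposal is correct and follows essentially the same approach as the paper: decompose $A_\Psi$ into its diagonal and hollow parts, use the Frobenius orthogonality of these two subspaces to apply the Pythagorean identity, and conclude that the minimizer is $D^\star = \operatorname{diag}(A_\Psi)$ with residual $\|\operatorname{off}(A_\Psi)\|_F$. Your explicit justification of the orthogonality via the entrywise definition of the Frobenius inner product is a welcome addition that the paper leaves implicit.
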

    The proof is straightforward and thus is omitted. It shows that the diagonal model suffers from an irreducible error equal to the energy of the off-diagonal terms.

    % \textcolor{blue}{
    % \begin{proof}
    % The space of matrices \(\mathbb{C}^{K \times K}\) admits an orthogonal decomposition with respect to the Frobenius inner product: \(\mathbb{C}^{K \times K} = \mathcal{D} \oplus \mathcal{D}^\perp\), where \(\mathcal{D}^\perp\) is the subspace of hollow matrices (matrices with zero diagonal).
    % Since \(A_{\Psi} \in \mathbb{C}^{K \times K}\), we can uniquely decompose it as:
    % \begin{equation}
    % A_{\Psi} = \operatorname{diag}(A_{\Psi}) + \operatorname{off}(A_{\Psi}),
    % \end{equation}
    % where \(\operatorname{diag}(A_{\Psi}) \in \mathcal{D}\) and \(\operatorname{off}(A_{\Psi}) \in \mathcal{D}^\perp\).
    % Consider the objective function for the diagonal estimator:
    % \begin{equation}
    % \| A_{\Psi} - D \|_F^2 = \| (\operatorname{diag}(A_{\Psi}) - D) + \operatorname{off}(A_{\Psi}) \|_F^2.
    % \end{equation}
    % By the orthogonality of \(\mathcal{D}\) and \(\mathcal{D}^\perp\), this expands to:
    % \begin{equation}
    % \| \operatorname{diag}(A_{\Psi}) - D \|_F^2 + \| \operatorname{off}(A_{\Psi}) \|_F^2.
    % \end{equation}
    % The minimum is achieved when \(D = \operatorname{diag}(A_{\Psi})\), yielding the residual error:
    % \begin{equation}
    % \varepsilon_{\mathcal{D}} = \| \operatorname{off}(A_{\Psi}) \|_F.
    % \end{equation}
    % Therefore, if \(A_{\Psi}\) is not strictly diagonal (i.e., \(\|\operatorname{off}(A_{\Psi})\|_F > 0\)), the diagonal parameterization incurs a strictly positive irreducible error. Conversely, since \(A_{\Psi} \in \mathcal{M}\), the full model can set \(M = A_{\Psi}\), resulting in \(\varepsilon_{\mathcal{M}} = 0\).
    % \end{proof}
    % }

    \subsection{Basis Selection}\label{sec:basis}
    % With a cross-mode mixing layer, we analyze the impact of basis selection. Specifically, we consider the \textit{Aligned Basis} (e.g., POD(Y)) against the \textit{Entangled Basis} (e.g., POD(X,Y)). 

    % \begin{definition}[\textbf{POD(Y) and POD(X,Y)}]
    % Let $X = [x_1,\dots,x_N] \in \mathbb{R}^{n \times N}$ and $Y = [y_1,\dots,y_N] \in \mathbb{R}^{m \times N}$ be the input and output snapshot matrices, respectively. The basis $\text{POD}(Y)$ is defined as the span of the first $r$ left singular vectors of $Y$ (equivalently, the dominant eigenvectors of $YY^T$). The basis $\text{POD}(X,Y)$ is defined as the span of the first $r$ left singular vectors of the concatenated snapshot matrix $[X;Y] \in \mathbb{R}^{(n+m)\times N}$.
    % \end{definition}

    With a cross-mode mixing layer, we discuss the impact of basis selection on model performance. While prior works \cite{cheng2025podno, loeffler2024graph} have employed various basis choices for different problems, we find experimentally and theoretically that the output-aligned POD basis, $\text{POD}(Y)$, yields the best performance.

    \begin{definition}[\textbf{POD}$(Y)$]
    Let the snapshot matrix of the output eigenspaces be $Y = [V_{n,1}, \dots, V_{n,N_{\text{train}}}]
    \in \mathbb{C}^{N \times n N_{\text{train}}}$, where each $V_{n,i} \in \mathbb{C}^{N \times n}$ is the eigenspace (of dimension $n$) corresponding to the $i$-th training sample, and $N_{\text{train}}$ is the number of training samples. The basis $\text{POD}(Y)$ is defined as the span of the first $K$ left singular vectors of $Y$ (equivalently, the dominant eigenvectors of $YY^H$).
    \end{definition}

    % Even if the architecture allows for dense cross-mode interactions ($M$), using an entangled basis can be suboptimal due to reduction in effective parameter capacity.
    
    % \begin{itemize}
    %     \item Let $\mathcal{Z} \cong \mathbb{R}^K$ be the spectral feature space of dimension $K$.
    %     \item Let $\mathcal{S}_Y \subset \mathcal{Z}$ be the \textbf{Target Subspace} (signal manifold) with $\dim(\mathcal{S}_Y) = d_Y$.
    %     \item Let $\mathcal{S}_\perp \subset \mathcal{Z}$ be the \textbf{Irrelevant Subspace} (noise manifold) with $\dim(\mathcal{S}_\perp) = d_\perp$, such that $\mathcal{Z} = \mathcal{S}_Y \oplus \mathcal{S}_\perp$.
    %     \item Let the physical dynamics be represented by a linear operator $D: \mathcal{S}_Y \to \mathcal{S}_Y$.
    %     \item Let the spectral layer operation be defined as $z_{out} = M z_{in}$, where $M \in \mathbb{R}^{K \times K}$ is the learnable cross-mode mixing matrix.
    % \end{itemize}

    To understand why $\text{POD}(Y)$ is preferable, we introduce the following framework. Any basis $\Psi$ of dimension $K$ induces a decomposition of the spectral feature space $\mathcal{Z} \cong \mathbb{R}^K$ into two orthogonal components:
    \begin{itemize}
        \item $\mathcal{S}_Y \subset \mathcal{Z}$: the \textbf{target subspace} (signal manifold) with $\dim(\mathcal{S}_Y) = d_Y$, spanned by basis directions aligned with the output, and
        \item $\mathcal{S}_\perp \subset \mathcal{Z}$: the \textbf{noise subspace}
        with $\dim(\mathcal{S}_\perp) = d_\perp$, such that $\mathcal{Z} = \mathcal{S}_Y \oplus \mathcal{S}_\perp$.
    \end{itemize}
    A basis is called \textit{entangled} if its vectors mix signal and noise components, resulting in non-negligible $\|z_\perp\|$ for typical inputs; it is called \textit{aligned} if $\mathcal{S}_\perp \approx \{0\}$, i.e., $z_\perp \approx 0$ for all $z \in \mathcal{Z}$. By construction, $\text{POD}(Y)$ is an aligned basis.
    
    Any entangled basis incurs a capacity penalty on the cross-mode mixing operator $M \in \mathbb{R}^{K \times K}$, formalized as follows.
    
    % \begin{definition}[\textbf{Entangled Basis}]
    % A basis $\{ \phi_k \}_{k=1}^K$ is called \textit{entangled} if the basis vectors are linear combinations of signal and noise components. 
    % \end{definition}
    % An input state $z \in \mathcal{Z}$ can be written as
    % \begin{equation}
    %     z = z_Y + z_\perp, \quad \text{where } z_Y \in \mathcal{S}_Y, z_\perp \in \mathcal{S}_\perp.
    % \end{equation}
    % In an entangled basis (e.g., POD(X,Y)), the energy of the irrelevant component is in general non-negligible, i.e., $\|z_\perp\| \gg 0$. In an aligned basis (like POD(Y)), $z_\perp \approx 0$.

    \begin{theorem}[\textbf{Effective Rank Contraction}]
    Let the learning objective be to approximate the dynamics $D$ on $\mathcal{S}_Y$ while suppressing the noise from $\mathcal{S}_\perp$. That is, the learned operator $M$ must satisfy:
    \begin{enumerate}
        \item \textbf{Denoising Condition:} $M z_\perp = 0, \quad \forall z_\perp \in \mathcal{S}_\perp$.
        \item \textbf{Evolution Condition:} $M z_Y \approx D z_Y, \quad \forall z_Y \in \mathcal{S}_Y$.
    \end{enumerate}
    If $\text{rank}(M) \le r_{max}$, where $r_{max}$ is the rank budget for weight matrix $M$, then the effective rank available to approximate the physical dynamics $D$ is bounded by:
    \begin{equation}
        \text{rank}(M|_{\mathcal{S}_Y}) \le r_{max} - d_\perp
    \end{equation}
    \end{theorem}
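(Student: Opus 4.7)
The plan is to combine the rank-nullity theorem with the denoising constraint, working with the low-rank factorization $M = UV^\top$ where $U, V \in \mathbb{R}^{K \times r_{max}}$. The key idea is that the denoising requirement forces part of $M$'s effective capacity to be spent eliminating the $d_\perp$-dimensional irrelevant subspace, leaving strictly fewer rank directions free to approximate the physical dynamics $D$ on $\mathcal{S}_Y$.

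First I would translate the denoising hypothesis $M z_\perp = 0$ for all $z_\perp \in \mathcal{S}_\perp$ into the kernel inclusion $\mathcal{S}_\perp \subseteq \ker(M)$, which gives $\dim\ker(M) \ge d_\perp$ and hence $\text{rank}(M) \le K - d_\perp$ by rank-nullity. Next, using the direct-sum decomposition $\mathcal{Z} = \mathcal{S}_Y \oplus \mathcal{S}_\perp$ together with this kernel inclusion, I would observe that $M(\mathcal{Z}) = M(\mathcal{S}_Y)$, so the rank of $M$ equals the rank of its restriction, namely $\text{rank}(M|_{\mathcal{S}_Y}) = \text{rank}(M)$. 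Finally, incorporating the rank-$r_{max}$ parameterization, I would write $M = \sum_{i=1}^{r_{max}} u_i v_i^\top$ and interpret the denoising constraint as imposing $d_\perp$ independent linear conditions that must be absorbed by the factors $\{v_i\}$: the column span of $V$ must lie in $\mathcal{S}_\perp^\perp$, so that at most $r_{max} - d_\perp$ rank-one summands remain free to reproduce $D$ nontrivially on $\mathcal{S}_Y$, yielding the stated bound.

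The main obstacle is formalizing this last counting step. A naive rank-nullity argument alone gives only $\text{rank}(M) \le K - d_\perp$, which is not sharp enough when $r_{max} < K$; obtaining the finer bound $r_{max} - d_\perp$ requires carefully accounting for how the denoising constraint consumes degrees of freedom inside the low-rank factorization. A clean route is to adapt a basis so that $d_\perp$ of the singular directions of $M$ are dedicated to annihilating $\mathcal{S}_\perp$ (via the entangled-basis assumption, every coordinate direction carries a nonzero $\mathcal{S}_\perp$-projection, so these directions cannot be reused), and then argue that only the remaining $r_{max} - d_\perp$ singular directions can contribute to $M|_{\mathcal{S}_Y}$. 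This entangled-basis hypothesis is what makes the contrast meaningful: in an aligned basis $\mathcal{S}_\perp$ would coincide with a coordinate subspace, no directions would need to be sacrificed, and the full budget $r_{max}$ would remain available, which is precisely the phenomenon the theorem is designed to expose.
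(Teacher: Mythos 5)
Your proposal traces the same arc as the paper's own proof: denoising gives $\mathcal{S}_\perp \subseteq \ker(M)$, rank-nullity gives $\dim\ker(M) \ge d_\perp$, and a budget-counting step asserts that annihilating $\mathcal{S}_\perp$ consumes $d_\perp$ of the $r_{max}$ rank budget. You are more explicit about the factorization $M = UV^\top$ than the paper, and you correctly flag that rank-nullity alone yields only $\text{rank}(M) \le K - d_\perp$, which is not the stated bound once $r_{max} < K$. That flag is the right instinct, because the step you then attempt is precisely where both your argument and the paper's stop being rigorous. In fact your intermediate observation that $\text{rank}(M|_{\mathcal{S}_Y}) = \text{rank}(M)$ whenever $\mathcal{S}_\perp \subseteq \ker M$ and $\mathcal{Z} = \mathcal{S}_Y \oplus \mathcal{S}_\perp$ is in tension with the conclusion: take $r_{max}$ orthonormal vectors $w_1,\dots,w_{r_{max}}$ inside $\mathcal{S}_\perp^\perp$ (available whenever $r_{max} \le K - d_\perp$) and set $M = \sum_i w_i w_i^\top$. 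This $M$ annihilates $\mathcal{S}_\perp$ exactly, has $\text{rank}(M) = r_{max}$, and hence $\text{rank}(M|_{\mathcal{S}_Y}) = r_{max}$, strictly exceeding $r_{max} - d_\perp$.

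The entangled-basis repair you sketch does not close this gap: $U$ and $V$ are free learnable matrices whose columns (and whose singular vectors) may point in any direction, not only along coordinate axes, so a column of $V$ that sits in $\mathcal{S}_\perp^\perp$ is a productive degree of freedom, not a sacrificed one. The shared conceptual slip is to treat annihilation of $\mathcal{S}_\perp$ as if it spent rank, when it actually spends null space: the denoising condition forces the row space of $M$ into $\mathcal{S}_\perp^\perp$, giving $\text{rank}(M) \le \min(r_{max}, K - d_\perp)$, but it does not subtract $d_\perp$ from the $r_{max}$ image budget. The paper's phrase ``rank consumed by $\mathcal{S}_\perp$'' is asserted to equal $d_\perp$ without being defined, and your phrase ``$d_\perp$ singular directions dedicated to annihilating $\mathcal{S}_\perp$'' is the same undefended claim --- a direction $M$ annihilates has singular value zero, which never counts toward rank. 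You have therefore faithfully reproduced the paper's argument together with its gap; the bound the hypotheses actually support is $\text{rank}(M|_{\mathcal{S}_Y}) \le \min(r_{max}, K - d_\perp)$, and the tighter $r_{max} - d_\perp$ would require an extra structural assumption that neither proof states.
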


    \begin{proof}
        The result follows directly from the Rank-Nullity Theorem: the Denoising Condition
        requires $\mathcal{S}_\perp \subseteq \mathcal{N}(M)$, which consumes at least
        $d_\perp$ degrees of freedom from the rank budget $r_{max}$, leaving at most
        $r_{max} - d_\perp$ for approximating $D$ on $\mathcal{S}_Y$.
    \end{proof}

    % \textcolor{blue}{
    % \begin{proof}
    % Let $\mathcal{N}(M)$ denote the null space (kernel) of $M$.
    % The Denoising Condition implies that the irrelevant subspace must be mapped to zero:
    % \begin{equation}
    %     \mathcal{S}_\perp \subseteq \mathcal{N}(M)
    % \end{equation}
    % Therefore, the dimension of the null space must satisfy:
    % \begin{equation}
    %     \dim(\mathcal{N}(M)) \ge \dim(\mathcal{S}_\perp) = d_\perp
    % \end{equation}
    % By the Rank-Nullity Theorem, for the operator $M$ defined on the domain $\mathcal{Z}$ of dimension $K$ (assuming effective domain is covered by rank $r_{max}$ logic):
    % \begin{equation}
    %     \text{rank}(M) + \dim(\mathcal{N}(M)) = \dim(\text{Dom}(M))
    % \end{equation}
    % Considering the active degrees of freedom allowed by the low-rank constraint $r_{max}$, the dimension of the image space $\text{Im}(M)$ is at most $r_{max}$.
    % Since $M$ must annihilate $\mathcal{S}_\perp$, the subspace available to represent the image of $\mathcal{S}_Y$ is orthogonal to the components consumed by the null space requirement. Thus, the effective rank for the signal is:
    % \begin{equation}
    %     \text{rank}_{eff} = \text{rank}(M) - \text{dim}(\text{rank consumed by } \mathcal{S}_\perp) \le r_{max} - d_\perp
    % \end{equation}
    % If $d_\perp$ is large (high entanglement), the model suffers from a \textit{capacity deficit}, leading to underfitting of the physical dynamics $D$.
    % \end{proof}
    % }
    
    Following the establishment of the effective rank contraction caused by entangled bases, we now identify the optimal basis configuration. To maximize the representational capacity of the spectral layer, the basis should be strictly aligned with the geometry of the target signal manifold.
    
    \begin{theorem}[\textbf{Optimality of the Output-Aligned Basis}]
    \label{thm:optimality_pod_y}
    Let \( Y \in \mathbb{R}^{N \times d_h} \) be the snapshot matrix of the target physical states (the solution manifold), and let \( C_Y = \mathbb{E}[y y^\top] \) be the covariance operator of the target signals.
    Assume the spectral mixing layer seeks a rank-\( K \) approximation of the operator output.
    The projection error energy:
    \begin{equation}
    \mathcal{E}(\Psi) = \mathbb{E}_{y \sim \mathcal{S}_Y} \left[ \| y - \Psi \Psi^H y \|_2^2 \right]
    \end{equation}
    is minimized if and only if the basis \( V \) spans the same subspace as the first \( K \) principal components of \( Y \) (denoted as \(\text{POD}(Y)\)). Consequently, under this basis, the dimension of the irrelevant subspace vanishes (\( d_\perp = 0 \)), and the effective rank is maximized:
    \begin{equation}
    \text{rank}(M|_{\mathcal{S}_Y}) = \text{rank}(M).
    \end{equation}
    \end{theorem}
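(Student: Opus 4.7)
The plan is to recognize Theorem~\ref{thm:optimality_pod_y} as a geometric restatement of the classical Eckart--Young--Mirsky / PCA optimality result, and to reduce the minimization of $\mathcal{E}(\Psi)$ to a Ky Fan trace-maximization problem over the Stiefel manifold of orthonormal frames $\Psi$ with $\Psi^H\Psi = I_K$.

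First I would rewrite the expected projection error in a trace form that exposes its dependence on $\Psi$. Using $\|y-\Psi\Psi^H y\|_2^2 = \|y\|_2^2 - \|\Psi^H y\|_2^2$ (valid because $\Psi$ is orthonormal, so $\Psi\Psi^H$ is an orthogonal projector), taking expectations, and applying the cyclic property of the trace, I would obtain
\begin{equation}
\mathcal{E}(\Psi) \;=\; \mathrm{tr}(C_Y) - \mathrm{tr}\bigl(\Psi^H C_Y \Psi\bigr).
\end{equation}
Since $\mathrm{tr}(C_Y)$ is constant, minimizing $\mathcal{E}$ over orthonormal $\Psi \in \mathbb{R}^{N\times K}$ is equivalent to maximizing $\mathrm{tr}(\Psi^H C_Y \Psi)$.

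Next I would invoke the Ky Fan maximum principle (a direct consequence of the Courant--Fischer variational characterization applied to the Hermitian positive semidefinite operator $C_Y$): for any orthonormal $\Psi$,
\begin{equation}
\mathrm{tr}(\Psi^H C_Y \Psi) \;\le\; \sum_{k=1}^{K} \lambda_k(C_Y),
\end{equation}
with equality if and only if $\mathrm{range}(\Psi)$ coincides with the $K$-dimensional dominant invariant subspace of $C_Y$, i.e.\ the span of the top $K$ eigenvectors. Since the principal components of the snapshot matrix $Y$ are (by definition) the eigenvectors of $C_Y$ ordered by decreasing eigenvalue, this is precisely $\mathrm{range}(\mathrm{POD}(Y))$, proving the ``if and only if'' claim for the first part.

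For the second part I would argue that if $\mathcal{S}_Y$ is spanned by (a subset of) the top-$K$ POD directions, then any $y \in \mathcal{S}_Y$ satisfies $\Psi\Psi^H y = y$, so its decomposition $y = y_Y + y_\perp$ in the notation of the preceding definition has $y_\perp = 0$; hence $d_\perp = 0$. Finally, by the effective rank contraction result (the previous theorem with $d_\perp=0$), the capacity bound collapses to $\mathrm{rank}(M|_{\mathcal{S}_Y}) \le r_{\max}$, and since $M|_{\mathcal{S}_Y}$ is just the restriction of $M$ to a subspace on which $M$ acts non-trivially, $\mathrm{rank}(M|_{\mathcal{S}_Y}) = \mathrm{rank}(M)$. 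The main conceptual obstacle, and the only place care is required, is ensuring the equality case in Ky Fan is stated at the level of \emph{subspaces} rather than individual frames (any orthogonal rotation within the dominant eigenspace yields the same optimum), which matches the theorem's ``spans the same subspace'' formulation; the remaining steps are standard linear-algebraic manipulations.
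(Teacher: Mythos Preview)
Your proposal is correct and follows essentially the same approach as the paper: the paper's proof consists of a single sentence invoking ``the variational characterization of the singular value decomposition (Eckart--Young--Mirsky theorem),'' and your trace reformulation plus Ky Fan argument is precisely the standard unpacking of that citation. Your write-up is in fact considerably more detailed than the paper's own proof.
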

    
    \begin{proof}
    The proof follows from the variational characterization of the singular value decomposition (Eckart-Young-Mirsky theorem \cite{eckart1936approximation}).
    
    \end{proof}
    
    This result justifies decoupling the input and output bases. While an entangled basis attempts to span both the input condition space and the solution space, the operator learning task is strictly directional ($X \to Y$). By aligning the spectral layer's codomain with $\text{POD}(Y)$, we ensure the matrix $M$ focuses solely on the destination manifold.

    \subsection{Banded Low-Rank Parameterization}\label{sec:BandLowRank}

    While Proposition \ref{prop:spectral_capacity} suggests that a dense mixing matrix \(M\) minimizes representation error, it introduces significant optimization difficulties. In practice, operators arising from physical systems exhibit specific spectral structures, namely, energy concentration and local spectral coupling. Ignoring these structures leads to poor convergence and overfitting. To address this, we propose a \textbf{Banded Low-Rank (BLR)} parameterization.
    
    \paragraph{Low-Rank Property of the Spectral Operator}
    The first structural prior is the low effective rank of the target operator in the POD basis. By construction, the POD basis vectors \(\Psi = \{\psi_k\}_{k=1}^K\) are ordered by their energy contribution to the dataset. Consequently, the output of the physical operator, \(y = \mathcal{A}(x)\), resides primarily in the subspace spanned by the dominant modes.
    
    This implies that the spectral representation \(A_{\Psi} = \Psi^H A \Psi\) is \emph{numerically low-rank}, i.e., its singular values decay rapidly. A full-rank parameterization of \(M\) is therefore over-parameterized, learning noise correlations in the tail of the spectrum rather than physical dynamics. We thus propose a low-rank mixing operator:
    \begin{equation}
    M_{LR} = U W^{H}, \quad \text{where } U, W \in \mathbb{C}^{K \times r}, \, r \ll K.
    \end{equation}
    
    \paragraph{Optimization Stability via Banded Constraints}
    The second, and critical, structural prior concerns the \emph{ordering} of the spectrum. A standard low-rank factorization \(M_{LR} = UW^H\) is globally coupled; at initialization, it connects every frequency mode to every other mode with non-zero probability. This causes two optimization issues.
    \begin{enumerate}
        \item \textbf{Spectral Pollution:} Random initialization allows high-frequency noise to propagate directly into dominant low-frequency modes (from high indices to low indices). This disrupts the coarse-grained physical features early in training.
        \item \textbf{Optimization Landscape:} The model is forced to expend early training iterations ``unlearning'' these unphysical long-range couplings to restore the correct spectral hierarchy, leading to a rugged optimization landscape.
    \end{enumerate}
    
    To stabilize training, we introduce a banded constraint. Physical energy transfer typically occurs between spectrally adjacent modes (e.g., energy cascade in fluids) rather than arbitrary non-local jumps. We enforce this locality by applying a band mask \(\mathcal{B}\) to the low-rank matrix, ensuring that interactions are restricted to a bandwidth \(b\) along the diagonal.
    
    % \paragraph{Mathematical Formulation}
    We define the binary mask matrix \(\mathcal{B} \in \{0, 1\}^{K \times K}\) as:
    \begin{equation}
    \mathcal{B}_{ij} = 
    \begin{cases} 
    1 & \text{if } |i - j| \le b, \\
    0 & \text{otherwise},
    \end{cases}
    \end{equation}
    where \(b\) is a hyperparameter controlling the spectral interaction bandwidth. The final cross-mode mixing operator \(M\) is defined as the Hadamard product of the mask and the low-rank component:
    \begin{equation}
    M_{BLR} = \mathcal{B} \odot (U W^{H}),
    \end{equation}
    where $\odot$ denotes the element-wise multiplication, i.e., Hadamard product. The low-rank structure \(UW^H\) captures the dominant subspace of the operator, while the banded mask \(\mathcal{B}\) imposes an inductive bias that respects the spectral ordering, preventing noise injection and accelerating convergence.

    \section{Methodology}\label{sec:Method}
    In this section, we implement DEN for the parametric Steklov eigenvalue problem and analyze the specific challenges introduced by the non-self-adjoint nature of the operator. The network structure and the corresponding loss function are specified. Following the prediction of the eigenspace, the Rayleigh-Ritz procedure is employed to recover the eigenvalues and eigenfunctions. Finally, the data generation process is described.

    \subsection{Problem Setup}

    \subsubsection{The non-self-adjoint Steklov Eigenvalue Problem}
    Let $\Omega \subset \mathbb{R}^2$ be a bounded domain with a Lipschitz boundary $\partial \Omega$. The parametric Steklov eigenvalue problem is to find the eigenpair $(\lambda, u)$ such that:
    \begin{equation} \label{eq:steklov_pde}
        \begin{cases}
        \begin{aligned}
            \Delta u + k^2 n({x}) u &= 0 \quad &&\text{in } \Omega, \\
            \frac{\partial u}{\partial \nu} + \lambda u &= 0 \quad &&\text{on } \partial \Omega,
        \end{aligned}
        \end{cases}
    \end{equation}
    where $\nu$ denotes the outward unit normal vector, $k$ is the wavenumber, and $n({x})=n_1(x) + i\frac{n_2(x)}{k}$ is the refractive index parameter, $n_1(x)>0,\ n_2(x)\geq 0$ are bounded smooth functions.

    The variational formulation corresponding to \eqref{eq:steklov_pde} is to find $\lambda \in \mathbb{C}$ and $u \in H^1(\Omega) \setminus \{0\}$ such that
    \begin{equation}\label{WeakForm}
        a(u, v) = \lambda b(u, v) \quad \forall v \in H^1(\Omega),
    \end{equation}
    where the sesquilinear forms $a(\cdot, \cdot)$ and $b(\cdot, \cdot)$ are defined as:
    \begin{equation}
        a(u, v) = -\int_{\Omega} \nabla u \cdot \nabla \bar{v} \, d{x} + k^2 \int_{\Omega} n({x}) u \bar{v} \, d{x}, \quad
        b(u, v) = \int_{\partial \Omega} u \bar{v} \, ds.
    \end{equation}

    When the parameter $n({x})$ is complex-valued (absorbing medium), the problem is \textbf{non-self-adjoint}. This introduces several theoretical and numerical complexities compared to the self-adjoint case.
    \begin{itemize}
        \item \textbf{Complex Spectrum:} The eigenvalues $\lambda$ are complex-valued ($\lambda \in \mathbb{C}$). Sorting is typically performed based on the modulus $|\lambda|$ or real part, which is susceptible to reordering under perturbation.
        \item \textbf{Non-Orthogonality:} The eigenfunctions $\{u_i\}$ are no longer mutually orthogonal with respect to the standard inner product. 
        \item \textbf{Invalidity of Min-Max Principles:} 
        For self-adjoint eigenvalue problems (where $n({x})$ is real), the Rayleigh quotient 
        \begin{equation}
            \mathcal{R}(u) = \frac{a(u, u)}{b(u, u)}
        \end{equation}
        provides a variational characterization of the spectrum. The eigenvalues can be determined via the Courant-Fischer min-max principle:
        \begin{equation}
            \lambda_k = \min_{V_k \subset H^1, \dim(V_k)=k} \max_{u \in V_k, u \neq 0} \mathcal{R}(u).
        \end{equation}
        This property is instrumental in many physics-informed learning frameworks. For instance, Dai et al. \cite{dai2024subspace} leverage this property to construct loss functions based on minimizing the Rayleigh quotient, thereby training neural networks to find eigenpairs without labeled data. However, in the non-self-adjoint case, the above property does not hold. 
        % Even if one considers the modulus $|\mathcal{R}(u)|$, the stationary points of the magnitude of the Rayleigh quotient do not generally correspond to the eigenvalues of the non-selfadjoint operator. 
        %This fundamental lack of variational bounds renders standard optimization-based spectral solvers ineffective, necessitating the supervised operator learning approach adopted in this work.
    \end{itemize}

    \subsubsection{Discrete Formulation and Mode Crossing}
    FEM discretization of \eqref{WeakForm} leads to the generalized matrix eigenvalue problem:
    \begin{equation} \label{eq:GEP}
        \mathbf{A}(n) \mathbf{x} = \lambda \mathbf{B} \mathbf{x}.
    \end{equation}
    The matrices are defined as $\mathbf{A}(n) = -\mathbf{S} + k^2 \mathbf{M}_n$ and $\mathbf{B} = \mathbf{M}_{\partial \Omega}$, where $\mathbf{S}$ is the stiffness matrix, $\mathbf{M}_n$ is the domain mass matrix weighted by $n({x})$, and $\mathbf{M}_{\partial \Omega}$ is the boundary mass matrix (see \cite{liu2019spectral}).

    Our goal is to learn the operator $\mathcal{G}: n({x}) \mapsto \{(\lambda_i, u_i)\}_{i=1}^K$, mapping the parameter $n({x})$ to the first $K$ eigenpairs (sorted by the modulus of the eigenvalues). One might try to train neural operators to regress $\lambda_i$ and $u_i$ directly similar to \cite{li2025operator}. However, preliminary investigations reveal a fundamental difficulty: while eigenvalues generally exhibit continuous dependence on parameters, the eigenfunctions are difficult to predict. This stems from the behavior of non-self-adjoint spectra under perturbation. Due to the classic perturbation theory in \cite{stewart1990matrix}, the sensitivity of an eigenvector is inversely proportional to the separation between its associated eigenvalue and the rest of the spectrum. When eigenvalues form a cluster or approach a crossing point as $n(x)$ varies, the individual eigenvectors become ill-conditioned and can rotate rapidly within the invariant subspace. In practical computations, this phenomenon manifests as "index switching" or highly non-smooth behavior in the mapping $n(x) \mapsto u_i$. Consequently, these discontinuities pose significant challenges for mode-matching algorithms and necessitate substantial training data.

    \begin{figure}[!htp]
        \centering
        \includegraphics[width=1\linewidth]{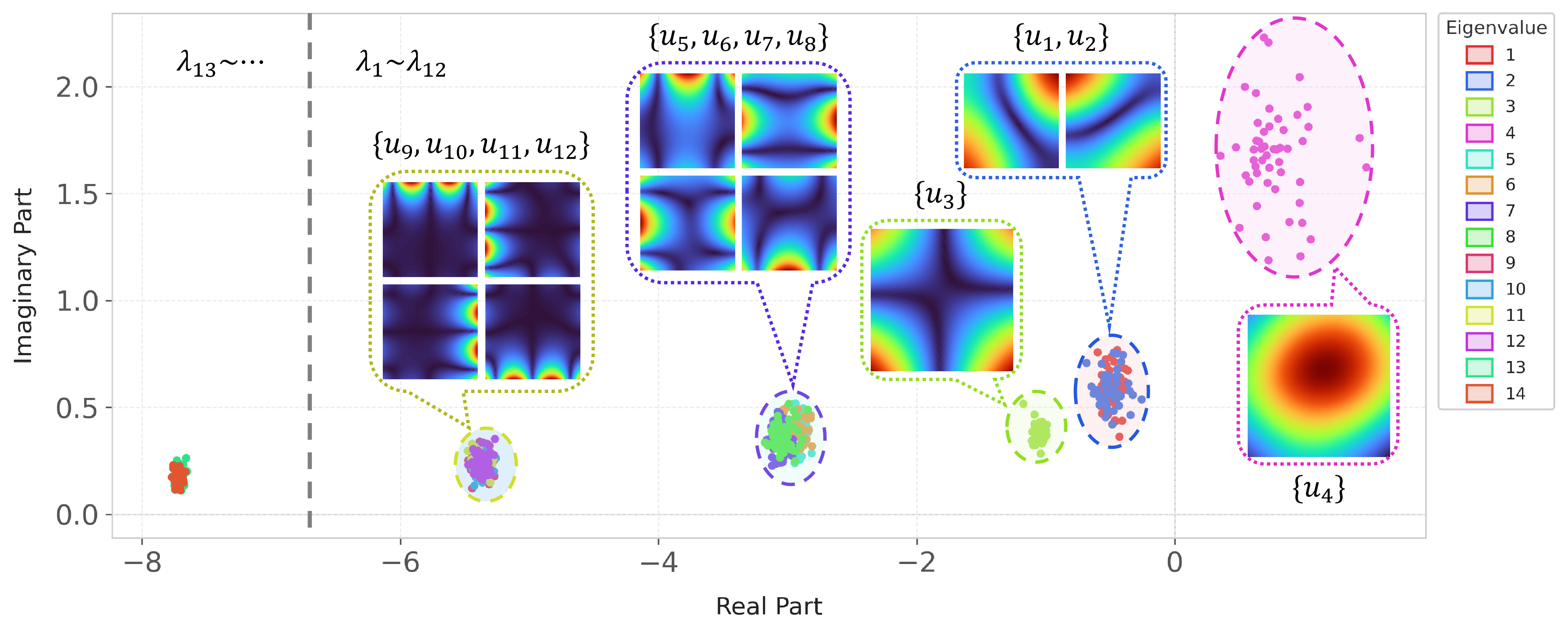}
        \caption{Demonstration of eigenvalue clusters for the first 14 eigenvalues.
        Each dot represents a sampled eigenvalue of a particular refractive index $n(x)$,
        colored by eigenvalue index assigned via eigenfunction pattern matching.
        The dashed boundaries delineate five clusters for the first 12 eigenvalues:
        $\{\lambda_1, \lambda_2\}$, $\{\lambda_3\}$, $\{\lambda_4\}$,
        $\{\lambda_5, \lambda_6, \lambda_7, \lambda_8\}$, and
        $\{\lambda_9, \lambda_{10}, \lambda_{11}, \lambda_{12}\}$.
        The modulus of representative eigenfunctions of each cluster are shown in the insets.
        $\lambda_{13}$ and beyond are separated from the first 12 by the dashed vertical line.}
        \label{fig:ev_cluster}
    \end{figure}

    % Eigenvalue indices are assigned by matching each computed eigenvalue
    % to its corresponding eigenfunction pattern, rather than by magnitude ordering alone.
    As illustrated in Fig.~\ref{fig:ev_cluster}, the first 12 eigenvalues naturally organize into five well-separated clusters in the complex plane:
    $\{\lambda_1, \lambda_2\}$, $\{\lambda_3\}$, $\{\lambda_4\}$,
    $\{\lambda_5, \lambda_6, \lambda_7, \lambda_8\}$, and
    $\{\lambda_9, \lambda_{10}, \lambda_{11}, \lambda_{12}\}$.
    Eigenvalues belonging to the same cluster share similar eigenfunction
    modal structures, as shown by the inset visualizations.
    Within each cluster, the eigenvalues are closely interspersed and their
    ordering by modulus frequently changes across different realizations of $n(x)$,
    making magnitude-based indexing unreliable.
    In contrast, eigenvalues belonging to different clusters remain well separated in the complex plane across all samples, enabling unambiguous discrimination between clusters. This cluster structure motivates us to learn the mapping from $n({x})$ to eigenspace $\mathcal{U}_K = \text{span}\{u_1, \dots, u_K\}$. Then the individual eigenpairs are recovered via the Rayleigh-Ritz procedure, which solves a small-scale projected eigenvalue problem (see Section~\ref{sec:RayeighRitz}). We shall prove that the eigenspace associated with a cluster of eigenvalues is stable, provided the cluster is separated from the rest of the spectrum (see Section \ref{sec:SubStable} on subspace stability).

    % To overcome the instability of individual eigenvectors, we pivot to learning the \textbf{eigenspace}. Although individual eigenvectors are unstable near crossings, the invariant subspace associated with a cluster of eigenvalues is stable, provided the cluster is separated from the rest of the spectrum (see Section \ref{sec:SubStable} on subspace stability). Therefore, we propose to learn the mapping from $n({x})$ to eigenspace $\mathcal{U}_K = \text{span}\{u_1, \dots, u_K\}$. Then the individual eigenpairs are recovered via the Rayleigh-Ritz procedure, which solves the small-scale projected eigenvalue problem (see Section~\ref{sec:RayeighRitz}). % This hybrid approach combines the differentiability of subspace mappings with the numerical robustness of classical linear algebra.

    \subsection{Subspace Learning Framework}

    Recently, Fanaskov et al. \cite{fanaskov2025deep} introduced a subspace regression using deep learning. Let $\mathcal{V} \subset \mathbb{C}^N$ be the target eigenspace spanned by the first $k_V$ eigenfunctions, represented by the matrix $\mathbf{V} \in \mathbb{C}^{N \times k_V}$. Our goal is to train a neural operator to predict a subspace $\mathcal{U}$, represented by $\mathbf{U} \in \mathbb{C}^{N \times k_U}$, such that $\mathcal{V}$ is embedded within $\mathcal{U}$ ($k_V \leq k_U$). The condition $\mathcal{V} \subseteq \mathcal{U}$ ensures that the true eigenvectors can be recovered as linear combinations of the basis vectors of $\mathcal{U}$.

    Ideally, the loss function $\mathcal{L}(\mathbf{U}, \mathbf{V})$ should satisfy two properties.
    \begin{enumerate}
        \item[(i)] \textbf{Basis Invariance:} The loss should depend only on the subspaces spanned by the matrices, not on their specific basis representations. That is, for any invertible matrices $\mathbf{S} \in \mathbb{C}^{k_U \times k_U}$ and $\mathbf{T} \in \mathbb{C}^{k_V \times k_V}$:
        \begin{equation}
            \mathcal{L}(\mathbf{U}\mathbf{S}, \mathbf{V}\mathbf{T}) = \mathcal{L}(\mathbf{U}, \mathbf{V}).
        \end{equation}
        \item[(ii)] \textbf{Inclusion Monotonicity:} The loss should be non-negative, reaching zero if and only if the inclusion condition is met:
        \begin{equation}
            \mathcal{L}(\mathbf{U}, \mathbf{V}) \geq 0, \quad \text{and} \quad \mathcal{L}(\mathbf{U}, \mathbf{V}) = 0 \iff \text{span}(\mathbf{V}) \subseteq \text{span}(\mathbf{U}).
        \end{equation}
        Note that when $k_U > k_V$, the loss is asymmetric, i.e., $\mathcal{L}(\mathbf{U}, \mathbf{V}) \neq \mathcal{L}(\mathbf{V}, \mathbf{U})$.
    \end{enumerate}

    A loss function satisfying these criteria is given in \cite{fanaskov2025deep}, which measures the orthogonality defect between the subspaces:
    \begin{equation} \label{eq:L1_loss}
        \mathcal{L}(\mathbf{U}, \mathbf{V}) = k_V - \|\mathbf{Q}_V^H \mathbf{Q}_U\|_F^2,
    \end{equation}
    where $\mathbf{Q}_U$ and $\mathbf{Q}_V$ are the orthonormal bases obtained from $\mathbf{U}$ and $\mathbf{V}$ (e.g., via QR decomposition). Since the eigenspaces associated with spectral clusters are relatively stable and low-dimensional. Therefore, $\mathcal{L}$ provides a direct and precise gradient signal for optimization.

    % \subsubsection{Implementation Details}
    To handle the complex-valued nature of the Steklov problem within standard deep learning frameworks, we adopt a real-valued tensor representation.

    \textbf{Input Representation:} The complex parameter $n({x}) = n_r({x}) + i n_i({x})$ is stacked with the spatial coordinates. For a mesh with $N$ nodes, the input is formatted as a tensor of shape $[4, N]$, where the four channels correspond to the real part of $n$, the imaginary part of $n$, and the $x, y$ coordinates of the mesh nodes. The coordinates aids the network in capturing geometry-dependent spectral features.

    \textbf{Output and Orthogonalization:} The network outputs a real tensor $\tilde{\mathbf{U}}_{\text{out}}$ of shape $[2k_U, N]$. The first $k_U$ channels represent the real parts of the basis vectors, while the subsequent $k_U$ channels represent the imaginary parts. We reconstruct a complex matrix $\mathbf{U} \in \mathbb{C}^{N \times k_U}$ via:
    \begin{equation}
        \mathbf{U}_{:, j} = (\tilde{\mathbf{U}}_{\text{out}})_{j, :} + i (\tilde{\mathbf{U}}_{\text{out}})_{j+k_U, :}, \quad j=1,\dots,k_U.
    \end{equation}
    To ensure numerical stability and satisfy the input requirements of $\mathcal{L}$, we apply a QR decomposition to $\mathbf{U}$ during the forward pass:
    \begin{equation}
        \mathbf{U} = \mathbf{Q}_U \mathbf{R}_U.
    \end{equation}
    The resulting unitary matrix $\mathbf{Q}_U$ is then passed to the loss function \eqref{eq:L1_loss} for training.

    \subsection{Eigenpair Prediction via Rayleigh-Ritz Method}\label{sec:RayeighRitz}

    Once the predictive network DEN provides an approximate eigenspace $\mathbf{Q}_U \in \mathbb{C}^{N \times k_U}$ (orthonormalized via QR decomposition), we employ the Rayleigh-Ritz procedure to extract the specific eigenpairs. This step can be viewed as a Galerkin projection of the full fine-scale problem onto the low-dimensional subspace spanned by $\mathbf{Q}_U$.

    Given the parameter-dependent mass matrix $\mathbf{M}_n$ (constructed from the input $n({x})$) and the pre-computed constant matrices $\mathbf{S}$ and $\mathbf{M}_{\partial \Omega}$, the full system matrices are $\mathbf{A}(n) = -\mathbf{S} + k^2 \mathbf{M}_n$ and $\mathbf{B} = \mathbf{M}_{\partial \Omega}$. The Rayleigh-Ritz method proceeds as follows.

    \begin{enumerate}
        \item \textbf{Projection (Subspace Assembly):} Project the full matrices onto the predicted subspace to form the reduced stiffness and mass matrices:
        \begin{equation}
            \hat{\mathbf{A}} = \mathbf{Q}_U^H \mathbf{A}(n) \mathbf{Q}_U, \quad 
            \hat{\mathbf{B}} = \mathbf{Q}_U^H \mathbf{B} \mathbf{Q}_U,
        \end{equation}
        where $\hat{\mathbf{A}}, \hat{\mathbf{B}} \in \mathbb{C}^{k_U \times k_U}$ are small, dense matrices.
        
        \item \textbf{Solving the Reduced Problem:} Solve the generalized eigenvalue problem:
        \begin{equation}
            \hat{\mathbf{A}} \mathbf{w}_j = \mu_j \hat{\mathbf{B}} \mathbf{w}_j, \quad j=1, \dots, k_U.
        \end{equation}
        Note that $k_U \ll N$ (e.g., $k_U \sim 10^1$ while $N \sim 10^4$).
        
        \item \textbf{Lifting (Reconstruction):} The Ritz values $\mu_j$'s serve as approximations for the exact eigenvalues $\lambda_j$'s. The corresponding Ritz vectors are lifted back to obtain the approximated eigenfunctions:
        \begin{equation}
            \tilde{u}_j = \mathbf{Q}_U \mathbf{w}_j.
        \end{equation}
    \end{enumerate}
    This procedure ensures that the predicted eigenfunctions $\tilde{u}_j$ strictly reside within the predicted subspace $\mathcal{U}$.

    % The primary motivation for this hybrid approach stems from the computational bottleneck inherent in the Finite Element Method (FEM). In standard FEM workflows, the assembly of the system matrices is relatively fast, scaling linearly with the number of elements. The computational dominance lies in solving the Generalized Eigenvalue Problem (GEP).

    % While the system matrices $\mathbf{A}$ and $\mathbf{B}$ are typically sparse, allowing for efficient $\mathcal{O}(N)$ matrix-vector products, standard solvers rely on iterative Krylov subspace methods (e.g., Arnoldi or Lanczos iterations). A critical limitation of these methods is their initialization: they typically commence with a random starting vector. Due to this lack of prior knowledge, the solver requires a substantial number of iterations to "filter out" the unwanted spectral components and converge to the target eigenspace. Consequently, although the asymptotic complexity per iteration is linear, the large constant factor, determined by the number of iterations required for convergence, makes the eigensolver the overwhelming computational cost in the pipeline.

    The online computational cost of proposed method has three components.
    \begin{itemize}
        \item \textbf{Matrix Assembly:} The parameter-dependent matrix $\mathbf{A}(n)$ is constructed via an update of $\mathbf{M}_n$ followed by a sparse addition. This involves only basic sparse matrix arithmetic (e.g., two matrix multiplications and one addition in our implementation).
        \item \textbf{Projection Cost:} The formation of the reduced matrices $\hat{\mathbf{A}}$ and $\hat{\mathbf{B}}$ requires the projection of the sparse full matrices onto the predicted basis $\mathbf{Q}_U$. This operation involves sparse-dense matrix multiplications, with a complexity of approximately $\mathcal{O}(N \cdot k_U^2)$.
        \item \textbf{Solving Cost:} Solving the reduced GEP involves dense matrices of size $k_U \times k_U$, entailing a cost of $\mathcal{O}(k_U^3)$.
    \end{itemize}

    Given that the subspace dimension $k_U$ is a small constant, the total online complexity scales linearly, $\mathcal{O}(N)$. 
    %This represents a significant speedup, particularly suitable for tasks requiring repeated evaluations such as inverse design or uncertainty quantification.
    %Note that classical Krylov subspace methods generate a subspace by repeatedly applying the operator to a starting vector (e.g., $\mathcal{K}_m = \text{span}\{v, \mathbf{A}^{-1}\mathbf{B}v, \dots\}$). These iterations require solving large linear systems at every step, which is expensive. 
    DEN can be viewed as \textbf{``Data-Driven Subspace Prediction"}. It predicts a subspace directly from the input in a single forward pass. The Rayleigh-Ritz step then acts as a final ``polishing" mechanism. 
    %ensuring that the spectral approximations are consistent with the underlying operators $\mathbf{A}$ and $\mathbf{B}$.

    \begin{algorithm}
    \caption{Training of DEN}
    \label{alg:DEN_train}
    \begin{algorithmic}
    \STATE{\textbf{Input:} Training samples $\{n_i(x),\, V(n_i(x))\}_{i=1}^{N_{\text{train}}}$,
    hyperparameters (learning rate $\eta$, epochs $N_{\text{ep}}$, rank $K$)}
    \STATE{\textbf{Output:} Trained network $\mathrm{DEN}_{\theta}$}
    \STATE{}
    \STATE{\textit{// Offline: basis construction}}
    \STATE{Assemble snapshot matrix $Y = \bigl[V(n_1(x)),\, \dots,\, V(n_{N_{\text{train}}}(x))\bigr]$}
    \STATE{Compute $\Phi \leftarrow \mathrm{POD}(Y)$ \quad \textit{// first $K$ left singular vectors of $Y$}}
    \STATE{Initialize $\mathrm{DEN}_{\theta}$ with fixed basis $\Phi$}
    \STATE{}
    \STATE{\textit{// Online: network training}}
    \FOR{$\mathrm{ep} = 1, \dots, N_{\mathrm{ep}}$}
        \FOR{$i = 1, \dots, N_{\mathrm{train}}$}
            \STATE{$\hat{U}_i \leftarrow \mathrm{DEN}_{\theta}(n_i(x))$}
            \STATE{$\theta \leftarrow \theta - \eta\,\nabla_{\theta}\,
                   \mathcal{L}\!\left(\hat{U}_i,\, V(n_i(x))\right)$}
        \ENDFOR
    \ENDFOR
    \end{algorithmic}
    \end{algorithm}

    \begin{algorithm}
    \caption{Evaluation of DEN}
    \label{alg:DEN_eval}
    \begin{algorithmic}
    \STATE{\textbf{Input:} $n(x)$, sparse matrices $A(n(x))$ and $B$,
    trained $\mathrm{DEN}_{\theta}$, target number of eigenpairs $k_V$}
    \STATE{\textbf{Output:} Eigenvalues $\{\hat{\lambda}_j\}_{j=1}^{k_V}$ and eigenspace $\hat{V}$}
    \STATE{}
    \STATE{$U \leftarrow \mathrm{DEN}_{\theta}(n(x))$ \quad \textit{// $U \in \mathbb{C}^{N \times k_U}$}}
    \STATE{$Q_U, R_U \leftarrow \mathrm{QR}(U)$ \quad \textit{// economy QR decomposition}}
    \STATE{}
    \STATE{\textit{// Rayleigh--Ritz projection}}
    \STATE{$A_U \leftarrow Q_U^H A(n(x))\, Q_U$, \quad $B_U \leftarrow Q_U^H B\, Q_U$}
    \STATE{Solve $A_U\, \mathbf{u} = \lambda\, B_U\, \mathbf{u}$ for all $k_U$ eigenpairs
           $\{(\hat{\lambda}_j,\, \mathbf{u}_j)\}_{j=1}^{k_U}$}
    \STATE{}
    \STATE{\textit{// Select target eigenpairs}}
    \IF{$k_U > k_V$}
        \STATE{Sort $\{(\hat{\lambda}_j,\, \mathbf{u}_j)\}_{j=1}^{k_U}$ by
               $|\hat{\lambda}_j|$ in ascending order}
        \STATE{$\{\hat{\lambda}_j\}_{j=1}^{k_V} \leftarrow$ first $k_V$ eigenvalues}
        \STATE{$\hat{V} \leftarrow Q_U\,[\mathbf{u}_1,\, \dots,\, \mathbf{u}_{k_V}]$}
    \ELSE
        \STATE{$\{\hat{\lambda}_j\}_{j=1}^{k_V} \leftarrow \{\hat{\lambda}_j\}_{j=1}^{k_U}$}
        \STATE{$\hat{V} \leftarrow Q_U$}
    \ENDIF
    \end{algorithmic}
    \end{algorithm}

    The training and evaluation procedures of DEN are summarized in Algorithms~\ref{alg:DEN_train} and \ref{alg:DEN_eval}, respectively. Algorithm~\ref{alg:DEN_train} iterates over individual training samples. In practice, the training loop processes inputs and outputs in mini-batches, with the gradient update performed once per batch.

    \subsection{Dataset Generation}\label{sec:dataset}

    We generate a diverse dataset of complex-valued refractive-index profiles $n({x})$. The complex parameter is defined as $n({x}) = n_r({x}) + i n_i({x})$. We synthesize the spatial variability of $n({x})$ by superposing random sinusoidal modes. To ensure independence between the real and imaginary components, we generate two distinct random fields, $g_r({x})$ and $g_i({x})$. A random field $g({x})$ is constructed as follows. For integers $m\le j\le M$,
    \begin{equation}\label{eq:nx_gen}
        g({x}) = \sum_{j=m}^{M} \beta^{-j} \Bigl(
            a_j \,\sin\!\bigl(2\pi f_j\,u({x};\alpha_j)+\theta_j\bigr)
            + b_j \,\sin\!\bigl(2\pi f_j\,v({x};\alpha_j)+\varphi_j\bigr)
        \Bigr),
    \end{equation}
    where ${x}=(x_1,x_2)$, and the coordinate rotation is given by:
    \begin{equation}
        u({x};\alpha)=\frac{x_1\cos\alpha+x_2\sin\alpha}{2},\qquad
        v({x};\alpha)=\frac{x_1\sin\alpha+x_2\cos\alpha}{2}.
    \end{equation}
    For each mode $j$, the parameters are sampled as:
    \begin{equation}
        a_j,\;b_j\sim\mathcal{N}(0,1),\qquad
        \theta_j,\;\varphi_j,\;\alpha_j\sim\mathcal{U}(0,2\pi)
    \end{equation}
    with the spatial frequency $f_j=j$. The scalar $\beta\in(0,1)$ controls the spectral decay, favoring low-frequency structures to mimic smooth background media perturbed by fine local variations.

    The final complex parameter $n({x})$ is obtained by independently sampling coefficients for $g_r$ and $g_i$, followed by a min-max normalization:
    \begin{equation}
        n({x}) = \mathcal{T}_{[n_{r}^{\min}, n_{r}^{\max}]}(g_r({x})) + i \cdot \mathcal{T}_{[n_{i}^{\min}, n_{i}^{\max}]}(g_i({x})),
    \end{equation}
    where $\mathcal{T}_{[a,b]}(\cdot)$ denotes the linear scaling operator. In our experiments, we fix both the real range and the imaginary range to $[1.0, 5.0]$.

    Unlike methods that rely on rasterized images (pixels), our problem is defined on a fixed unstructured triangular mesh for $\Omega$ with $N$ nodes, which better capture complex geometries and conform to data generation by FEM. Each realization of $n({x})$ is represented as a complex vector $\mathbf{n} \in \mathbb{C}^N$. Similarly, the target eigenfunctions are stored as vectors $\mathbf{u} \in \mathbb{C}^N$.

    We generate a total of 2000 samples (1600 for training and 400 for testing). To balance computational efficiency with approximation accuracy, the mesh resolution is chosen such that $N \approx 1500$ nodes. This resolution is sufficient to resolve the target eigenfunctions while keeping the training cost of the graph-based neural operators manageable. 
    
    % \textcolor{red}{Fig.~\ref{fig:nx} visualizes the real and imaginary parts of selected samples from the dataset.}

    % \begin{figure}[!htp]
    %     \centering
    %     \includegraphics[width=1.0\textwidth]{figs_new/nx.png}
    %     \caption{Samples of generated $n(x)$}
    %     \label{fig:nx}
    % \end{figure}

    \section{Learnability and Error Analysis}\label{sec:Theory}

    \subsection{Stability of Eigenspace}\label{sec:SubStable}
    We consider the generalized eigenvalue problem
    \begin{equation}
    A x = \lambda B x.
    \end{equation}
    Let $\Gamma$ be a simple closed contour $\Gamma \subset \mathbb{C}$ enclosing a cluster $\Lambda_0$ consisting of $K$ eigenvalues such that $\Gamma \cap \sigma(A,B) = \emptyset$. Then there exists a $\delta > 0$ such that
    \begin{equation}
    \mathrm{dist}\big(\Gamma,\ \sigma(A,B)\setminus \Lambda_0\big) = \delta > 0.
    \end{equation}
    Consequently,
    The resolvent along $\Gamma$ is such that
    \begin{equation}
    R_{\Gamma} := \sup_{z \in \Gamma} \|(Bz - A)^{-1}\| < \infty.
    \end{equation}

    Let $M$ be a perturbation matrix of $A$.
    \begin{assumption}
    \label{assump:smallM}
    The perturbation matrix $M$ satisfies the smallness condition
    \begin{equation}
    \max\{\|M\|, \|M\| \cdot R_\Gamma\} < 1.
    \end{equation}
    \end{assumption}
    \vspace{1em}
    \begin{lemma}[Resolvent Stability]
    \label{lemma:resolvent-stability}
    Under Assumption~\ref{assump:smallM}, the following hold.
    
    \begin{enumerate}
        \item For every perturbation matrix $M$ satisfying Assumption~\ref{assump:smallM},  
        $Bz - A - M$ is invertible for all $z \in \Gamma$, and
        \begin{equation}\label{eq:FiniteResolvent}
        \sup_{z \in \Gamma} \|(B z - A - M)^{-1}\|
        \,\le\, \frac{R_\Gamma}{1 - R_\Gamma \|M\|}.
        \end{equation}
    
        \item The resolvent difference satisfies
        \begin{equation}
        \|(Bz - A)^{-1} - (Bz - A - M)^{-1}\|
        \,\le\, R_\Gamma^2 \|M\| + O(\|M\|^2),
        \qquad z \in \Gamma.
        \end{equation}
    \end{enumerate}
    \end{lemma}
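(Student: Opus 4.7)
The plan is to prove both statements as a single application of the second resolvent identity, with the Neumann series handling invertibility and the resolvent difference formula handling the perturbation bound. Throughout, uniformity in $z \in \Gamma$ will be inherited from the definition of $R_\Gamma$ as a supremum over the contour, so each pointwise bound will automatically yield the supremum bound.

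For part~(1), I would factor $Bz - A - M = (Bz - A)\bigl(I - (Bz - A)^{-1} M\bigr)$, which reduces invertibility to showing that $I - (Bz - A)^{-1} M$ is invertible. The submultiplicativity of the operator norm gives $\|(Bz - A)^{-1} M\| \le R_\Gamma \|M\| < 1$ by Assumption~\ref{assump:smallM}, so the Neumann series
\begin{equation}
\bigl(I - (Bz - A)^{-1} M\bigr)^{-1} = \sum_{k=0}^{\infty} \bigl((Bz - A)^{-1} M\bigr)^k
\end{equation}
converges absolutely and is bounded in norm by $1/(1 - R_\Gamma \|M\|)$. Composing with $(Bz - A)^{-1}$ and taking the supremum over $z \in \Gamma$ yields \eqref{eq:FiniteResolvent}.

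For part~(2), I would apply the second resolvent identity. With $T_0 := Bz - A$ and $T_1 := Bz - A - M = T_0 - M$, a direct computation gives $T_0^{-1} - T_1^{-1} = -T_0^{-1} M T_1^{-1}$. Taking norms and substituting the bounds $\|T_0^{-1}\| \le R_\Gamma$ and the bound from part~(1) on $\|T_1^{-1}\|$ yields
\begin{equation}
\bigl\|(Bz - A)^{-1} - (Bz - A - M)^{-1}\bigr\| \,\le\, \frac{R_\Gamma^2 \|M\|}{1 - R_\Gamma \|M\|}.
\end{equation}
Expanding the denominator as a geometric series gives $R_\Gamma^2 \|M\| + R_\Gamma^3 \|M\|^2 + \dots = R_\Gamma^2 \|M\| + O(\|M\|^2)$, which is the claimed bound, and the uniformity in $z \in \Gamma$ is immediate.

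There is no genuine obstacle here: the argument is the textbook Neumann series plus second-resolvent-identity calculation, adapted to the pencil $Bz - A$ rather than a single operator $zI - A$. The one point worth flagging is that the finite-dimensional setting means no spectral-theoretic subtleties arise; the only ingredient we truly rely on beyond elementary linear algebra is the uniform resolvent bound $R_\Gamma$ guaranteed by Assumption~\ref{assump:RGamma}, which is precisely what makes the bounds independent of $z$ along the contour.
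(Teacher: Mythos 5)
Your proof is correct. Part~(1) is essentially identical to the paper's argument: factor out $Bz-A$, check $\|(Bz-A)^{-1}M\|\le R_\Gamma\|M\|<1$ via Assumption~\ref{assump:smallM}, and invoke the Neumann series. For part~(2), you take a mildly different but slightly cleaner route: rather than expanding $(I-E)^{-1}T(z)^{-1}$ as a power series and bounding the quadratic-and-higher tail separately (as the paper does), you apply the second resolvent identity $T_0^{-1}-T_1^{-1}=-T_0^{-1}MT_1^{-1}$ and then feed in the part~(1) bound for $\|T_1^{-1}\|$. This gives the explicit closed-form bound $R_\Gamma^2\|M\|/(1-R_\Gamma\|M\|)$ in one step, from which the stated $R_\Gamma^2\|M\|+O(\|M\|^2)$ follows by geometric expansion. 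The two arguments are equivalent in content and land on the same bound, but the resolvent-identity version avoids handling the Neumann tail explicitly, so it is a small simplification rather than a genuinely different approach.
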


    \begin{proof}
    Let $T(z) = Bz - A$. $T(z)$ is invertible for all $z \in \Gamma$, and $\|T(z)^{-1}\| \le R_{\Gamma}$.
    
    \begin{enumerate}
        \item We write the perturbed matrix as
        \begin{equation}
        Bz - A - M = T(z) - M = T(z) \left( I - T(z)^{-1} M \right).
        \end{equation}
        Let $E = T(z)^{-1} M$. By Assumption~\ref{assump:smallM}, we have
        \begin{equation}
        \|E\| \le \|T(z)^{-1}\| \, \|M\| \le R_\Gamma \|M\| < 1.
        \end{equation}
        Since $\|E\| < 1$, the matrix $I - E$ is invertible, and its inverse can be expressed via the Neumann series $(I - E)^{-1} = \sum_{j=0}^\infty E^j$. Consequently, $Bz - A - M$ is invertible, and its inverse is given by
        \begin{equation}
        (Bz - A - M)^{-1} = (I - E)^{-1} T(z)^{-1}.
        \end{equation}
        Therefore,
        \begin{equation}
        \begin{aligned}
        \|(Bz - A - M)^{-1}\| 
        &\le \|(I - E)^{-1}\| \, \|T(z)^{-1}\| \\
        &\le \frac{1}{1 - \|E\|} R_\Gamma \\
        &\le \frac{R_\Gamma}{1 - R_\Gamma \|M\|}.
        \end{aligned}
        \end{equation}
        
        \item From Part 1 and the Neumann expansion for $(I - E)^{-1}$, we have
        \begin{equation}
        \begin{aligned}
        (Bz - A - M)^{-1} &= \left( I + E + \sum_{j=2}^\infty E^j \right) T(z)^{-1} \\
        &= T(z)^{-1} + E T(z)^{-1} + \left( \sum_{j=2}^\infty E^j \right) T(z)^{-1}.
        \end{aligned}
        \end{equation}
        Recall that $(Bz - A)^{-1} = T(z)^{-1}$. The difference is
        \begin{equation}
        (Bz - A - M)^{-1} - (Bz - A)^{-1} = T(z)^{-1} M T(z)^{-1} + \mathcal{R}(z),
        \end{equation}
        where $\mathcal{R}(z) = \sum_{j=2}^\infty (T(z)^{-1} M)^j T(z)^{-1}$.
        The norm of the first-order term is bounded by:
        \begin{equation}
        \|T(z)^{-1} M T(z)^{-1}\| \le \|T(z)^{-1}\|^2 \|M\| \le R_\Gamma^2 \|M\|.
        \end{equation}
        The remainder term $\mathcal{R}(z)$ is of  $O(\|M\|^2)$ because
        \begin{equation}
        \|\mathcal{R}(z)\| \le \sum_{j=2}^\infty (R_\Gamma \|M\|)^j R_\Gamma = \frac{(R_\Gamma \|M\|)^2 R_\Gamma}{1 - R_\Gamma \|M\|} = O(\|M\|^2),
        \end{equation}
        provided that $R_\Gamma \|M\|$ is sufficiently small.
        Thus 
        \begin{equation}
        \|(Bz - A)^{-1} - (Bz - A - M)^{-1}\| \le R_\Gamma^2 \|M\| + O(\|M\|^2).
        \end{equation}
    \end{enumerate}
    \end{proof}

    \begin{remark}
    The importance of Lemma \ref{lemma:resolvent-stability} in numerical experiments is threefold.
    
    \begin{enumerate}
        \item \textbf{Configuration:} In our study, the reference operator is defined as $A = -S + M_{\bar{n}}$, where $\bar{n}$ represents the statistical mean of the refractive index samples in the dataset. The target spectral set $\Lambda_0 = \{\lambda_1, \dots, \lambda_{12}\}$ comprises the smallest 12 eigenvalues, which constitute the first five eigenvalue clusters.
        
        \item \textbf{Spectral Isolation:} The samples in the dataset satisfy the smallness condition (Assumption \ref{assump:smallM}). Lemma \ref{lemma:resolvent-stability} guarantees that the contour $\Gamma$ remains strictly within the resolvent set for the perturbed operator $A(n)$. This implies that no eigenvalues from outside $\Lambda_0$ (i.e., $\lambda_{13}, \dots$) can cross $\Gamma$ to mix with the target cluster. Thus, the dimensionality of the eigenspace enclosed by $\Gamma$ remains invariant across the entire dataset.
        
        \item \textbf{Uniform Stability:} Equation \eqref{eq:FiniteResolvent} establishes a uniform bound on the resolvent norm along the fixed contour $\Gamma$ for all admissible perturbations. It serves as a necessary condition for Theorem~\ref{thm:projection-stability}, showing the Lipschitz continuity of the mapping from the parameter $n({x})$ to the eigenspace.
    \end{enumerate}
    \end{remark}
    
    Recall the Riesz spectral projection $P_{n}$ associated with the pair $(A + M_{n}, B)$ corresponding to the eigenvalues inside $\Gamma$ is given by
        \begin{equation}
        P_{n} = \frac{1}{2\pi \mathrm{i}} \oint_\Gamma (Bz - A - M_{n})^{-1} B \, \mathrm{d}z.
        \end{equation}
    \begin{theorem}[Stability of Riesz Projections]
    \label{thm:projection-stability}
    Let $M_{n_1}$ be a perturbation matrix satisfying Assumption~\ref{assump:smallM}. Define
    \begin{equation}
    R_{\Gamma,n_1} := \sup_{z \in \Gamma} \|(Bz - A - M_{n_1})^{-1}\|.
    \end{equation}
    Let $M_{n_2}$ be another perturbation matrix and $\Delta M := M_{n_2} - M_{n_1}$.  
    Assume that
    \begin{equation}\label{DMR1}
    \|\Delta M\|\cdot R_{\Gamma,n_1} < 1.
    \end{equation}
    %Let $P_{n_i}$ denote the Riesz spectral projection associated to $\Gamma$ for
    % \begin{equation}
    % (A + M_{n_i})x = \lambda Bx, \qquad i=1,2.
    % \end{equation}
    Then the stability estimate holds
    \begin{equation}
    \|P_{n_1} - P_{n_2}\|
    \,\le\,
    \frac{|\Gamma|}{2\pi}\,
    R_{\Gamma,n_1}^2 \|\Delta M\|
    + O(\|\Delta M\|^2).
    \end{equation}
    Furthermore, if $M_{n_1}$ and $M_{n_2}$ arise from material parameters
    $n_1(x), n_2(x) \in C(\Omega)$, then
    \begin{equation}
    \|P_{n_1} - P_{n_2}\|
    \le C \|n_1 - n_2\|.
    \end{equation}
    \end{theorem}

    \begin{proof}
        We proceed in two steps: first, establishing the stability bound with respect to the matrix perturbation $\Delta M$, and second, relating this to the material parameter difference $n_1 - n_2$.

        \paragraph{1. Stability with respect to matrix perturbation}
        Let $T_1(z) = Bz - A - M_{n_1}$ and $T_2(z) = Bz - A - M_{n_2}$. We are interested in estimating the norm of the difference:
        \begin{equation}
        P_{n_1} - P_{n_2} = \frac{1}{2\pi \mathrm{i}} \oint_\Gamma \left( T_1(z)^{-1} - T_2(z)^{-1} \right) B \, \mathrm{d}z.
        \end{equation}
        We view $T_2(z)$ as a perturbation of $T_1(z)$:
        \begin{equation}
        T_2(z) = (Bz - A - M_{n_1}) - (M_{n_2} - M_{n_1}) = T_1(z) - \Delta M.
        \end{equation}
        By Assumption~\ref{assump:smallM} and Lemma~\ref{lemma:resolvent-stability}, $T_1(z)$ is invertible for $z \in \Gamma$ with 
        \begin{equation}
            \sup_{z \in \Gamma} \|T_1(z)^{-1}\| = R_{\Gamma,n_1}.
        \end{equation}
        Factoring out $T_1(z)$, we write
        \begin{equation}
        T_2(z) = T_1(z) \left( I - T_1(z)^{-1} \Delta M \right).
        \end{equation}
        Due to \eqref{DMR1}, the inverse $T_2(z)^{-1}$ can be expanded using the Neumann series:
        \begin{equation}
        \begin{aligned}
        T_2(z)^{-1} 
        &= \left( I - T_1(z)^{-1} \Delta M \right)^{-1} T_1(z)^{-1} \\
        &= \left( I + T_1(z)^{-1} \Delta M + \sum_{j=2}^\infty (T_1(z)^{-1} \Delta M)^j \right) T_1(z)^{-1} \\
        &= T_1(z)^{-1} + T_1(z)^{-1} \Delta M T_1(z)^{-1} + \mathcal{R}(z),
        \end{aligned}
        \end{equation}
        where the remainder term satisfies $\|\mathcal{R}(z)\| = O(\|\Delta M\|^2)$.
        Substituting this expansion into the integral for the projection difference, we obtain
        \begin{equation}
        P_{n_1} - P_{n_2} = - \frac{1}{2\pi \mathrm{i}} \oint_\Gamma \left( T_1(z)^{-1} \Delta M T_1(z)^{-1} + \mathcal{R}(z) \right) B \, \mathrm{d}z.
        \end{equation}
        Taking the norm and using the boundedness of $B$, we have
        \begin{equation}
        \begin{aligned}
        \|P_{n_1} - P_{n_2}\| 
        &\le \frac{|\Gamma|}{2\pi} \sup_{z \in \Gamma} \left\| T_1(z)^{-1} \Delta M T_1(z)^{-1} \right\| \|B\| + O(\|\Delta M\|^2) \\
        &\le \frac{|\Gamma|}{2\pi} \left( \sup_{z \in \Gamma} \|T_1(z)^{-1}\| \right)^2 \|\Delta M\| \|B\| + O(\|\Delta M\|^2) \\
        &\le C_B \frac{|\Gamma|}{2\pi} R_{\Gamma,n_1}^2 \|\Delta M\| + O(\|\Delta M\|^2).
        \end{aligned}
        \end{equation}
        This confirms the first estimate.

        \paragraph{2. Relation to material parameters}
        We now establish the bound in terms of $\|n_1 - n_2\|$. For FEM using linear Lagrange elements, the matrix $M_{n}$ is assembled from local element contributions. Let $\mathcal{T}_h = \{K_e\}_{e=1}^{M_e}$ be the triangulation. 
        
        The local mass matrix $M_{n_i}^{(e)}, i=1,2$ is obtained by
        \begin{equation}
        M_{n_i}^{(e)} = \operatorname{mat}\left( (\mathbf{w}^T \cdot \mathbf{n}_i^{(e)}) \frac{|K^{(e)}|}{3} \right),
        \end{equation}
        % where \textcolor{red}{$\mathbf{n}_e \in \mathbb{R}^3$ are the values for element $e$, $\mathbf{w}^T$ is a fixed weight matrix of size $9 \times 3$, and $\operatorname{mat}(\cdot)$ reshapes the resulting vector into a local $3 \times 3$ element matrix.} The global matrix $M_n$ is given by
        where $\mathbf{n}_i^{(e)} \in \mathbb{R}^3$ collects the values of $n(x)$ at the midpoints of the three edges of triangle $e$, $\mathbf{w}^T$ is a fixed weight matrix of size $9 \times 3$, and $\operatorname{mat}(\cdot)$ reshapes the resulting vector into a local $3 \times 3$ element matrix. The global matrix $M_{n_i}$ is given by
        \begin{equation}
        M_{n_i} = \sum_{e=1}^{M_e} \mathcal{A}_e (M_{{n_i}}^{(e)}),
        \end{equation}
        where $\mathcal{A}_e$ is the assembly operator. Since the mapping from the parameter vector $\mathbf{n}$ to the local matrix entries is linear (multiplication by constant weights $\mathbf{y}\mathbf{y}^T$ and area factors), and the global assembly is linear, the mapping $\Phi: \mathbf{n} \mapsto M_n$ is a linear operator between finite-dimensional vector spaces.
        
        Consequently, $\Phi$ is Lipschitz continuous. There exists a constant $C_{\mathrm{mesh}}$ depending on the mesh  (via element areas and weights) such that
        \begin{equation}
        \|\Delta M\| = \|M_{n_1} - M_{n_2}\| = \|\Phi(\mathbf{n}_1) - \Phi(\mathbf{n}_2)\| = \|\Phi(\mathbf{n}_1 - \mathbf{n}_2)\| \le C_{\mathrm{mesh}} \|{n}_1 - {n}_2\|.
        \end{equation}
        Substitution of this into the stability estimate derived in Part 1 yields
        \begin{equation}
        \|P_{n_1} - P_{n_2}\| \le C' R_{\Gamma,n_1}^2 C_{\mathrm{mesh}} \|n_1 - n_2\| + O(\|n_1 - n_2\|^2),
        \end{equation}
        which implies
        \begin{equation}
        \|P_{n_1} - P_{n_2}\| \le C \|n_1 - n_2\|,
        \end{equation}
        for a constant $C > 0$.
    \end{proof}

    The above results justify the learnability of the proposed mapping. First, Lemma \ref{lemma:resolvent-stability} guarantees Uniform Spectral Isolation across the entire dataset. It ensures that the contour $\Gamma$ separates the target cluster from the rest of the spectrum for all admissible samples, preventing any external eigenfunctions from "polluting" the target subspace. This establishes the consistency of the learning target. Second, Theorem \ref{thm:projection-stability} establishes the Lipschitz continuous dependence of the eigenspace on the refractive index $n$. 
    Combining these findings with the fact that the admissible parameters $n({x})$ is defined on a compact set (see Section \ref{sec:dataset}), we conclude that the mapping from the parameter space to the Grassmann manifold is both well-defined and continuous. According to the Universal Approximation Theorems for neural operators, such continuous mappings on compact domains are intrinsically learnable, thereby providing the theoretical guarantee for DEN. 

    \subsection{Eigenvalue Prediction Error Bound}

    Let $A, B \in \mathbb{C}^{n \times n}$ define a generalized eigenvalue problem. Let $\mathcal{V}$ be a $K$-dimensional space (eigenspace) and $\mathcal{U}$ be an approximation space of dimension $K$. Let $V \in \mathbb{C}^{n \times K}$ and $U \in \mathbb{C}^{n \times K}$ be orthonormal bases for $\mathcal{V}$ and $\mathcal{U}$, respectively ($V^HV = U^HU = I_K$). The misalignment between the subspaces is quantified by the sine of the canonical angles:
    \begin{equation}
    \sin\Theta(\mathcal{U}, \mathcal{V}) := \|(I - P_V)P_U\|_2 = \|(I - P_U)P_V\|_2,
    \end{equation}
    where $P_V = VV^H$ and $P_U = UU^H$ are the orthogonal projectors onto $\mathcal{V}$ and $\mathcal{U}$.
    
    Since $A_V = V^HAV$ and $A_U = U^HAU$ are representations in different bases, a direct comparison $A_U - A_V$ is not meaningful without first aligning the two bases. We introduce the \emph{optimal alignment unitary} $\Pi \in \mathbb{C}^{K\times K}$ obtained from the polar decomposition of $V^HU$: if $V^HU = W\Sigma Z^H$ is the SVD, then
    \begin{equation}\label{eq:Pi-def}
    \Pi := WZ^H.
    \end{equation}
    This is the unique unitary matrix that solves the orthogonal Procrustes problem
    \begin{equation}
    \Pi = \arg\min_{Q \in \mathbb{C}^{K\times K},\, Q^HQ = I} \|UQ - V\|_F.
    \end{equation}
    With this alignment, the residual $E := V - U\Pi$ satisfies
    \begin{equation}\label{eq:residual-bound}
    \|V - U\Pi\|_2 \le \sqrt{2}\,\sin\Theta(\mathcal{U},\mathcal{V}),
    \end{equation}
    which follows from the theory of canonical angles (see, e.g., \cite{stewart1990matrix}).
    
    \begin{lemma}[Matrix Projection Error Bound]
    \label{lemma:projection-error}
    Let $A_V = V^HAV$, $A_U = U^HAU$, and let $\Pi$ be the optimal alignment unitary defined in \eqref{eq:Pi-def}. Then
    \begin{align}
    \|A_V - \Pi^H A_U \Pi\|_2 &\le 2\sqrt{2}\,\|A\|_2 \, \sin\Theta(\mathcal{U},\mathcal{V}), \label{eq:A-bound}\\
    \|B_V - \Pi^H B_U \Pi\|_2 &\le 2\sqrt{2}\,\|B\|_2 \, \sin\Theta(\mathcal{U},\mathcal{V}). \label{eq:B-bound}
    \end{align}
    \end{lemma}
    
    \begin{proof}
    Define $\tilde{U} := U\Pi$, so that $\tilde{U}$ is the optimally aligned orthonormal basis for $\mathcal{U}$, and note that
    \begin{equation}
    \Pi^H A_U \Pi = \Pi^H (U^H A U) \Pi = (U\Pi)^H A (U\Pi) = \tilde{U}^H A \tilde{U}.
    \end{equation}
    Hence, with $E = V - \tilde{U}$,
    \begin{equation}
    \begin{aligned}
    A_V - \Pi^H A_U \Pi &= V^HAV - \tilde{U}^HA\tilde{U} \\
    &= V^HA(V - \tilde{U}) + (V^H - \tilde{U}^H)A\tilde{U} \\
    &= V^HAE + E^HA\tilde{U}.
    \end{aligned}
    \end{equation}
    Taking the spectral norm and applying the triangle inequality:
    \begin{equation}
    \|A_V - \Pi^H A_U \Pi\|_2 \le \|V\|_2\,\|A\|_2\,\|E\|_2 + \|E\|_2\,\|A\|_2\,\|\tilde{U}\|_2.
    \end{equation}
    Since $V$ and $\tilde{U} = U\Pi$ are orthonormal ($\|V\|_2 = \|\tilde{U}\|_2 = 1$), we obtain
    \begin{equation}
    \|A_V - \Pi^H A_U \Pi\|_2 \le 2\,\|A\|_2\,\|E\|_2 = 2\,\|A\|_2\,\|V - U\Pi\|_2.
    \end{equation}
    Substituting the bound \eqref{eq:residual-bound} yields
    \begin{equation}
    \|A_V - \Pi^H A_U \Pi\|_2 \le 2\sqrt{2}\,\|A\|_2\,\sin\Theta(\mathcal{U},\mathcal{V}).
    \end{equation}
    The derivation for $B$ is identical.
    \end{proof}

    Let $(M, N)$ be a regular matrix pair of size $K \times K$ that is diagonalizable. That is, there exist nonsingular matrices $X, Y$ such that $Y^HMX = \Lambda$ and $Y^HNX = \Omega$ are diagonal. Let $\{\lambda_i = \Lambda_{ii}/\Omega_{ii}\}$ be the generalized eigenvalues. Define the eigenvalue condition number for the $i$-th mode as:
    \begin{equation}
    \nu_i := \frac{\|x_i\|_2 \|y_i\|_2}{\sqrt{|\Lambda_{ii}|^2 + |\Omega_{ii}|^2}},
    \end{equation}
    where $x_i, y_i$ are the $i$-th columns of $X$ and $Y$.
    
    \begin{lemma}[Spectral Variation for Diagonalizable Pencils]
    \label{lemma:perturbation}
    Consider a perturbed pair $(\tilde{M}, \tilde{N}) = (M+E, N+F)$. The spectral variation, defined as the maximum distance from a true eigenvalue to the set of perturbed eigenvalues $\{\tilde{\lambda}_j\}$, satisfies
    \begin{equation}
    \mathrm{sv}_{(M,N)}[(\tilde{M}, \tilde{N})] 
    := \max_i \min_j |\tilde{\lambda}_j - \lambda_i| 
    \le \left(\max_i \nu_i\right) \sqrt{\|E\|_2^2 + \|F\|_2^2}.
    \end{equation}
    \end{lemma}
    
    \begin{proof}
    Corollary 2.8 of \cite{stewart1990matrix} gives 
    \begin{equation}
        \mathrm{sv}_{(M,N)}[(\tilde{M}, \tilde{N})] \leq 
        \left(\max_i \nu_i\right) \|[E\ F]\|_2.
    \end{equation}
     The bound follows by
     \begin{equation}
         \|[E\ F]\|_2^2=\|EE^H+FF^H\|_2 \leq \|EE^H\|_2+\|FF^H\|_2 = \|E\|_2^2 + \|F\|_2^2.
     \end{equation}
    \end{proof}

    \begin{lemma}[Unitary Invariance of Spectral Variation]
    \label{lemma:unitary-invariance}
    For any unitary matrix $\Pi \in \mathbb{C}^{K\times K}$ and any matrix pairs $(M,N)$ and $(\tilde{M},\tilde{N})$ of size $K\times K$,
    \begin{equation}
    \mathrm{sv}_{(M,N)}[(\tilde{M}, \tilde{N})] = \mathrm{sv}_{(M,N)}[(\Pi^H\tilde{M}\Pi,\, \Pi^H\tilde{N}\Pi)].
    \end{equation}
    \end{lemma}
    
    \begin{proof}
    The generalized eigenvalues of the pair $(\Pi^H\tilde{M}\Pi,\, \Pi^H\tilde{N}\Pi)$ are the same as those of $(\tilde{M},\tilde{N})$, since if $\tilde{M}v = \tilde{\lambda}\,\tilde{N}v$, then $(\Pi^H\tilde{M}\Pi)(\Pi^Hv) = \tilde{\lambda}\,(\Pi^H\tilde{N}\Pi)(\Pi^Hv)$. Because the spectral variation depends only on the eigenvalues of the two pairs, the identity follows.
    \end{proof}

    \begin{theorem}[Linear Convergence of Ritz Values]
    \label{thm:main-result}
    Consider the generalized eigenvalue problem $Ax = \lambda Bx$. Suppose the target eigenspace $\mathcal{V}$ corresponds to a cluster of $K$ eigenvalues that is separated from the rest of the spectrum. Let $(A_V, B_V)$ be the exact restriction and $(A_U, B_U)$ be the Rayleigh--Ritz approximation on $\mathcal{U}$. Assume $(A_V, B_V)$ is diagonalizable with maximal condition number $\nu_{\max} = \max_i \nu_i$. Then, the error in the Ritz values $\{\mu_j\}_{j=1}^K$ relative to the exact eigenvalues $\{\lambda_i\}_{i=1}^K$ is controlled linearly by the subspace misalignment:
    \begin{equation}
    \mathrm{sv}_{(A_V,B_V)}[(A_U, B_U)] \le 2\sqrt{2}\,\nu_{\max} \cdot  \sin\Theta(\mathcal{U},\mathcal{V}) \cdot \sqrt{\|A\|_2^2 + \|B\|_2^2}.
    \end{equation}
    \end{theorem}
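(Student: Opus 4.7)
The plan is to combine Lemma \ref{lemma:projection-error} with Lemma \ref{lemma:perturbation} by viewing the Rayleigh–Ritz pair $(A_U,B_U)$ as an additive perturbation of the exact restriction $(A_V,B_V)$. Concretely, I set
\begin{equation}
E := A_U - A_V, \qquad F := B_U - B_V,
\end{equation}
so that $(A_U, B_U) = (A_V + E, B_V + F)$ is a perturbed matrix pair of the regular diagonalizable pencil $(A_V, B_V)$.

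The first step is to control $\|E\|_2$ and $\|F\|_2$. This is exactly the content of Lemma \ref{lemma:projection-error}, which yields $\|E\|_2 \le 2\sqrt{2}\|A\|_2 \sin\Theta$ and $\|F\|_2 \le 2\sqrt{2}\|B\|_2 \sin\Theta$. Here the Procrustes-aligned choice of $U$ from Assumption \ref{assump:setup} is essential: it is what licenses the bound $\|U-V\|_2 \le \sqrt{2}\sin\Theta$ used inside the lemma. Squaring and adding gives
\begin{equation}
\sqrt{\|E\|_2^2 + \|F\|_2^2} \;\le\; 2\sqrt{2}\,\sin\Theta\,\sqrt{\|A\|_2^2 + \|B\|_2^2}.
\end{equation}

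The second step is to feed these perturbation norms into Lemma \ref{lemma:perturbation} applied to the $K\times K$ pencil $(A_V, B_V)$, whose generalized eigenvalues are precisely the exact cluster eigenvalues $\{\lambda_i\}_{i=1}^K$ (by the assumption that $\mathcal{V}$ is the invariant subspace associated with this cluster), and whose perturbed eigenvalues are the Ritz values $\{\mu_j\}_{j=1}^K$. Lemma \ref{lemma:perturbation} then gives
\begin{equation}
\mathrm{sv}_{(A_V,B_V)}[(A_U,B_U)] \;\le\; \nu_{\max}\sqrt{\|E\|_2^2 + \|F\|_2^2}.
\end{equation}
Substituting the bound from Step 1 delivers the claimed inequality.

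The main obstacle I expect is conceptual rather than computational: one must be careful that the Ritz problem $A_U w = \mu B_U w$ really is the perturbed pencil whose spectral variation is being measured, and that $(A_V, B_V)$ carries exactly the cluster eigenvalues of the original problem (so that bounding $\mathrm{sv}_{(A_V,B_V)}[(A_U,B_U)]$ actually bounds the Ritz-to-exact eigenvalue error). Both facts follow from $\mathcal{V}$ being an invariant subspace of the pencil $(A,B)$ restricted to the cluster, together with $V^*V = U^*U = I_K$ ensuring that the projected pencils are genuinely $K\times K$ regular pencils. Once this identification is made explicit, the rest reduces to the routine chain of inequalities above.
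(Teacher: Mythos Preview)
Your proposal is correct and follows essentially the same approach as the paper: set $E = A_U - A_V$, $F = B_U - B_V$, bound these via Lemma~\ref{lemma:projection-error}, and feed the result into Lemma~\ref{lemma:perturbation}. Your additional care in making explicit why $(A_V,B_V)$ carries exactly the cluster eigenvalues and why $(A_U,B_U)$ is the relevant perturbed pencil is a welcome clarification that the paper leaves implicit.
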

    
    \begin{proof}
    Let $\Pi$ be the optimal alignment unitary from \eqref{eq:Pi-def}. By Lemma~\ref{lemma:unitary-invariance}, the spectral variation is invariant under unitary congruence of the approximating pair:
    \begin{equation}
    \mathrm{sv}_{(A_V,B_V)}[(A_U, B_U)] = \mathrm{sv}_{(A_V,B_V)}[(\Pi^H A_U \Pi,\, \Pi^H B_U \Pi)].
    \end{equation}
    We now interpret the aligned pair $(\Pi^H A_U \Pi,\, \Pi^H B_U \Pi)$ as a perturbation of the exact pair $(A_V, B_V)$. Define the perturbation matrices
    \begin{equation}
    E = \Pi^H A_U \Pi - A_V, \quad F = \Pi^H B_U \Pi - B_V.
    \end{equation}
    By Lemma~\ref{lemma:perturbation}, the spectral variation is bounded by
    \begin{equation}
    \mathrm{sv}_{(A_V,B_V)}[(\Pi^H A_U \Pi,\, \Pi^H B_U \Pi)] \le \nu_{\max}\,\sqrt{\|E\|_2^2 + \|F\|_2^2}.
    \end{equation}
    Applying Lemma~\ref{lemma:projection-error}, we have
    \begin{equation}
    \|E\|_2 = \|\Pi^H A_U \Pi - A_V\|_2 \le 2\sqrt{2}\,\|A\|_2\,\sin\Theta, \quad
    \|F\|_2 \le 2\sqrt{2}\,\|B\|_2\,\sin\Theta.
    \end{equation}
    Substituting into the spectral variation bound:
    \begin{equation}
    \begin{aligned}
    \mathrm{sv}_{(A_V,B_V)}[(A_U, B_U)] 
    &\le \nu_{\max}\,\sqrt{(2\sqrt{2}\,\|A\|_2\,\sin\Theta)^2 + (2\sqrt{2}\,\|B\|_2\,\sin\Theta)^2} \\
    &= 2\sqrt{2}\,\nu_{\max}\,\sin\Theta\,\sqrt{\|A\|_2^2 + \|B\|_2^2}.
    \end{aligned}
    \end{equation}
    Thus, the error in the generalized Ritz values is $O(\sin\Theta)$, i.e., of the same order as the subspace approximation error.
    \end{proof}

    \section{Numerical Experiments}\label{sec:Exp}

    In this section, we present a comprehensive evaluation of DEN. We begin by detailing the experimental setup, training hyperparameters, and the evaluation metrics. Subsequently, we report the numerical results, demonstrating that DEN achieves high accuracy in predicting both eigenspaces and eigenpairs. 
    %Furthermore, we investigate the \textbf{zero-shot generalization} capability of our model by evaluating its performance when transferring the predicted subspace from a source mesh to a target mesh with a different resolution, highlighting the discretization-independent nature of the operator learning approach. 
    We extend the evaluation to investigate the robustness of DEN under varying magnitudes of the wavenumber $k$, demonstrating that DEN has exceptional performance in low-frequency regimes and maintains competitive accuracy for larger wavenumbers through the adoption of a subspace embedding strategy. Finally, we conducted an ablation study.

    \subsection{Settings and Evaluation Metrics}

    We implemented DEN using the PyTorch. The specific hyperparameters for the network architecture and the training procedure are summarized in Table \ref{tab:settings}. The experiments were conducted with a single NVIDIA GeForce RTX 3090 GPU (24GB VRAM). The training process for the entire dataset converges rapidly, requiring approximately 3 to 4 minutes for 201 epochs. On a test set of 400 samples, the total inference time is 3.9851s, of which 0.1468s is spent on DEN forward passes and 3.8383s on the Rayleigh--Ritz procedure, yielding an overall throughput of 100.3739 samples/s.
    
    \begin{table}[]
        \centering
        \begin{tabular}{|c|cc|}
        \hline
                                            & Parameter                             & Value     \\ \hline
        \multirow{6}{*}{Network settings}   & Number of spectral convolution layers & 4         \\
                                            & Number of hidden channels             & 32        \\
                                            & Dimension of POD basis                & 240       \\
                                            & Rank of mode-mixing matrix            & 32        \\
                                            & Bandwith of mode-mixng matrix         & 5         \\
                                            & Activation function                   & GeLU      \\ \hline
        \multirow{6}{*}{Training settings} & Batch size                            & 32        \\
                                            & Initial learning rate                 & 0.01      \\
                                            & Number of epochs                      & 201       \\
                                            & Learning rate decay step and rate     & (20, 0.8) \\
                                            & Optimizer                             & Adam      \\
                                            & Weight decay                          & 1e-6      \\ \hline
        \end{tabular}
        \caption{Network and training settings.}
    \end{table}\label{tab:settings}

    To provide a comprehensive assessment of DEN, we evaluate the spectral accuracy (eigenvalues), the geometric accuracy (eigenspaces), and the basis representation accuracy (eigenfunctions).

    \paragraph{1. Eigenvalue Metrics}
    For the $k$-th eigenvalue across the test dataset (of size $N_{test}$), we employ the following metric:
    \begin{itemize}
        \item \textbf{Mean Absolute Error (MAE):}
        \begin{equation}
            \text{MAE}_k = \frac{1}{N_{test}} \sum_{i=1}^{N_{test}} |\lambda_k^{(i)} - \hat{\lambda}_k^{(i)}|,
        \end{equation}
        where $\lambda_k^{(i)}$ and $\hat{\lambda}_k^{(i)}$ are the ground truth and predicted eigenvalues for the $i$-th sample, respectively. 
        
        % \item \textbf{Coefficient of Determination ($R^2$):}
        % \begin{equation}
        %     R^2_k = 1 - \frac{\sum_{i=1}^{N_{test}} |\lambda_k^{(i)} - \hat{\lambda}_k^{(i)}|^2}{\sum_{i=1}^{N_{test}} |\lambda_k^{(i)} - \bar{\lambda}_k|^2},
        % \end{equation}
        % where $\bar{\lambda}_k$ is the mean of the ground truth eigenvalues. %$R^2$ indicates how well the model captures the variability of the eigenvalues with respect to parameter changes.
        
        % \item \textbf{Mean Absolute Percentage Error (MAPE):} 
        % \begin{equation}
        %     \text{MAPE}_k = \frac{1}{N_{test}} \sum_{i=1}^{N_{test}} \frac{|\lambda_k^{(i)} - \hat{\lambda}_k^{(i)}|}{|\lambda_k^{(i)}|} \times 100\%.
        % \end{equation}
        % % This metric provides a scale-independent assessment, which is particularly useful given that high-frequency eigenvalues typically have larger magnitudes.
    \end{itemize}

    \paragraph{2. Eigenfunction Metrics}
    Directly comparing a predicted eigenfunction $\tilde{u}_k$ with the ground truth $u_k$ is problematic for non-self-adjoint problems due to mode switching and index shifting near eigenvalue crossings. The indices of eigenvalues sorted by modulus may swap due to small perturbations, making index-to-index comparison unreliable.

    Instead, we adopt a projection-based evaluation strategy. We assess how well the \textit{true} eigenfunction $u_k$ can be represented by the predicted subspace $\mathcal{U}$. If $u_k$ lies almost entirely within $\mathcal{U}$, then the Rayleigh-Ritz procedure will accurately recover it, regardless of its index.
    Let $u_k$ be the ground truth eigenfunction and $\mathcal{P}_{\mathcal{U}} u_k = \mathbf{Q}_U \mathbf{Q}_U^H u_k$ be its projection onto $\mathcal{U}$. We use:

    \begin{itemize}
        % \item \textbf{Maximum Absolute Error (MaxAE):}
        % \begin{equation}
        %     \text{MaxAE}_k = \max_{{x} \in \Omega} |(u_k - \mathcal{P}_{\mathcal{U}} u_k)({x})|.
        % \end{equation}
        % % This measures the worst-case pointwise approximation error, ensuring no local singularities or artifacts are missed. Noted that the eigenfunctions are scaled to the same norm.
        
        % \item \textbf{Cosine Similarity (CS):}
        % \begin{equation}
        %     \text{CS}_k = \frac{|\langle u_k, \mathcal{P}_{\mathcal{U}} u_k \rangle|}{\|u_k\|_2 \|\mathcal{P}_{\mathcal{U}} u_k\|_2}.
        % \end{equation}
        % CS measures the directional alignment. A value close to 1 indicates that the true eigenfunction is nearly parallel to the predicted subspace.
        
        \item \textbf{Relative $L^1$ Error (RelL1):}
        \begin{equation}
            \text{RelL1}_k = \frac{\|u_k - \mathcal{P}_{\mathcal{U}} u_k\|_1}{\|u_k\|_1}.
        \end{equation}
        % The $L^1$ norm imposes a stricter penalty on small, distributed errors compared to the $L^2$ norm, promoting sparsity in the residual and ensuring global structural fidelity.
    \end{itemize}

    \paragraph{3. Eigenspace Metrics}
    \begin{itemize}
        \item \textbf{Subspace Loss ($\mathcal{L}$):} As defined in \eqref{eq:L1_loss}, this metric serves as the training objective and quantifies the alignment of the orthonormal bases.
        
        \item \textbf{Projection Distance ($d_{pr}$):} 
        \begin{equation}
            d_{pr}(\mathcal{U}, \mathcal{V}) = \|\mathbf{P}_U - \mathbf{P}_V\|_2 = \sin(\theta_{\max}),
        \end{equation}
        where $\theta_{\max}$ is the largest principal angle between the subspaces. % This metric captures the \textit{worst-case} misalignment direction between the predicted and true subspaces.
        
        \item \textbf{Chordal Distance ($d_{ch}$):} 
        \begin{equation}
            d_{ch}(\mathcal{U}, \mathcal{V}) = \frac{1}{\sqrt{2}} \|\mathbf{P}_U - \mathbf{P}_V\|_F = \sqrt{\sum_{j} \sin^2(\theta_j)}.
        \end{equation}
        % This metric provides an aggregate measure of the distance, accounting for deviations across all dimensions of the subspace.
    \end{itemize}

    % \textcolor{blue}{
    % \textbf{Justification for Projection-based Metrics for Eigenfunctions:}
    % From the perspective of Galerkin methods, the accuracy of the approximate solution depends solely on the approximation property of the trial subspace $\mathcal{U}$. As long as the true solution $u_k$ resides "close" to $\mathcal{U}$ (i.e., the projection residual $\|(I - \mathcal{P}_{\mathcal{U}})u_k\|$ is small), the Rayleigh-Ritz procedure is mathematically guaranteed to find a Ritz vector $\tilde{u}$ that is close to $u_k$. Therefore, verifying the inclusion of $u_k$ in $\mathcal{U}$ is theoretically sufficient and more robust than verifying individual vector matches.
    % }

    \subsection{Numerical Results}

    We present the results for $\Omega = [0, 1]^2$. The target output consists of the eigenspace spanned by the first 12 eigenfunctions. Note that other domains including a disk, a triangular domain and a L-shaped domain were tested and the results are similar.
   
    % We first evaluate the quality of the predicted eigenspace $\mathcal{U}$. Table \ref{tab:err_sub} summarizes the prediction errors on the test set, demonstrating the accuracy of DEN in learning the parameter-to-eigenspace mapping. The test loss $\mathcal{L}$ converges to a magnitude of $10^{-5}$, indicating effective generalization to unseen refractive index profiles. Geometrically, the projection distance of $d_{pr} \approx 0.0020$ corresponds to a maximum principal angle of merely $0.11^\circ$, rendering the predicted subspace $\mathcal{U}$ nearly indistinguishable from the true invariant subspace $\mathcal{V}$ in terms of orientation. Furthermore, the low chordal distance ($d_{ch} \approx 0.0040$) guarantees negligible projection errors for the subsequent Rayleigh-Ritz reconstruction, thereby validating our core hypothesis: while individual eigenvectors may be numerically unstable due to non-selfadjointness, the underlying eigenspace is robust and can be learned with high precision.

    % Table \ref{tab:err_ev_ef} presents the quantitative errors of the 12 eigenpairs reconstructed via the Rayleigh-Ritz method, based on the predicted eigenspace. %The horizontal lines in the table demarcate the five distinct eigenvalue clusters identified in the spectrum: $\{1, 2\}$, $\{3\}$, $\{4\}$, $\{5, 6, 7, 8\}$, and $\{9, 10, 11, 12\}$.
    
    Table~\ref{tab:err_ev_ef} summarizes the prediction errors on the test set. The test loss $\mathcal{L}$ converges to a magnitude of $10^{-5}$, indicating effective generalization to unseen refractive indices. Geometrically, the projection distance $d_{pr} \approx 0.0020$ corresponds to a maximum principal angle of merely $0.11^\circ$, rendering the predicted subspace $\mathcal{U}$ nearly indistinguishable from the true invariant subspace $\mathcal{V}$ in orientation. The low chordal distance $d_{ch} \approx 0.0040$ further indicates negligible projection errors for the subsequent Rayleigh-Ritz procedure. The results confirms that, while individual eigenvectors may be numerically unstable due to non-self-adjointness, the underlying eigenspace is stable and learnable with high precision.

    \begin{table}[]
        \centering
        \begin{tabular}{|c|ccccc|}
            \hline
            \textbf{Index} & MAE ($\times 10^{-3}$) & RelL1 $(\times 10^{-3})$ & $\mathcal{L}$                                  & $d_{pr}$                               & $d_{ch}$                               \\ \hline
            1              & 0.9044                 & 1.1648                  & \multirow{12}{*}{1.65$\times 10^{-5}$} & \multirow{12}{*}{1.97$\times 10^{-3}$} & \multirow{12}{*}{4.03$\times 10^{-3}$} \\
            2              & 0.8942                 & 1.0954                  &                                        &                                        &                                        \\
            3              & 0.2758                 & 1.0797                  &                                        &                                        &                                        \\
            4              & 7.3634                 & 1.1048                  &                                        &                                        &                                        \\
            5              & 0.6809                 & 1.2536                  &                                        &                                        &                                        \\
            6              & 0.6883                 & 1.2546                  &                                        &                                        &                                        \\
            7              & 0.3966                 & 1.2630                  &                                        &                                        &                                        \\
            8              & 0.4256                 & 1.2707                  &                                        &                                        &                                        \\
            9              & 0.2067                 & 1.7668                  &                                        &                                        &                                        \\
            10             & 0.1820                 & 1.8457                  &                                        &                                        &                                        \\
            11             & 0.1750                 & 1.5992                  &                                        &                                        &                                        \\
            12             & 0.1952                 & 1.9085                  &                                        &                                        &                                        \\ \hline
            \end{tabular}
    \caption{Prediction error of eigenvalues, eigenfunctions and eigenspace.}
    \end{table}\label{tab:err_ev_ef}

    The predicted eigenvalues and projected eigenfunctions also exhibit high accuracy. The eigenvalue MAE remains consistently small ($\sim 10^{-4}$) with no significant degradation for higher modes, except for a mild outlier at Index 4 ($\text{MAE} = 7.36 \times 10^{-3}$) attributable to the intrinsically large spread of $\lambda_4$ across different refractive index profiles, as evidenced by the scattered distribution of the corresponding cluster in Fig.~\ref{fig:ev_cluster}. For the dense clusters $\{5,\ldots,8\}$ and $\{9,\ldots,12\}$, the Rayleigh-Ritz procedure effectively resolves the intra-cluster fine structure, benefiting from the accuracy of the predicted invariant subspace. The eigenfunction RelL1 errors remain uniformly below $0.002$ across all modes, confirming that the global structural integrity of the projected eigenfunctions is well preserved throughout the spectrum.

    Figure \ref{fig:ef_sample} provides a qualitative assessment of the subspace prediction quality. We visualize the eigenfunctions corresponding to indices 1, 3, 4, 5, and 9, which serve as representatives for the five distinct eigenvalue clusters identified in the spectrum.

    \textbf{Visual Inspection:}
    As illustrated in the top two rows, the projected eigenfunctions ($\mathcal{P}_{\mathcal{U}} u_k$) are visually indistinguishable from the ground truth ($u_k$) for both real and imaginary components. This confirms that the predicted subspace $\mathcal{U}$ successfully captures the dominant geometric features and oscillating patterns of the true eigenfunctions.

    \textbf{Error Distribution:}
    The pointwise relative error maps (bottom row) quantify this similarity, revealing that the discrepancy is strictly controlled, with maximum error magnitudes remaining on the order of $10^{-3}$. 

    \begin{figure}[!htbp]
        \centering
        \begin{minipage}[t]{1\textwidth}
            \centering
            \includegraphics[width=1\textwidth]{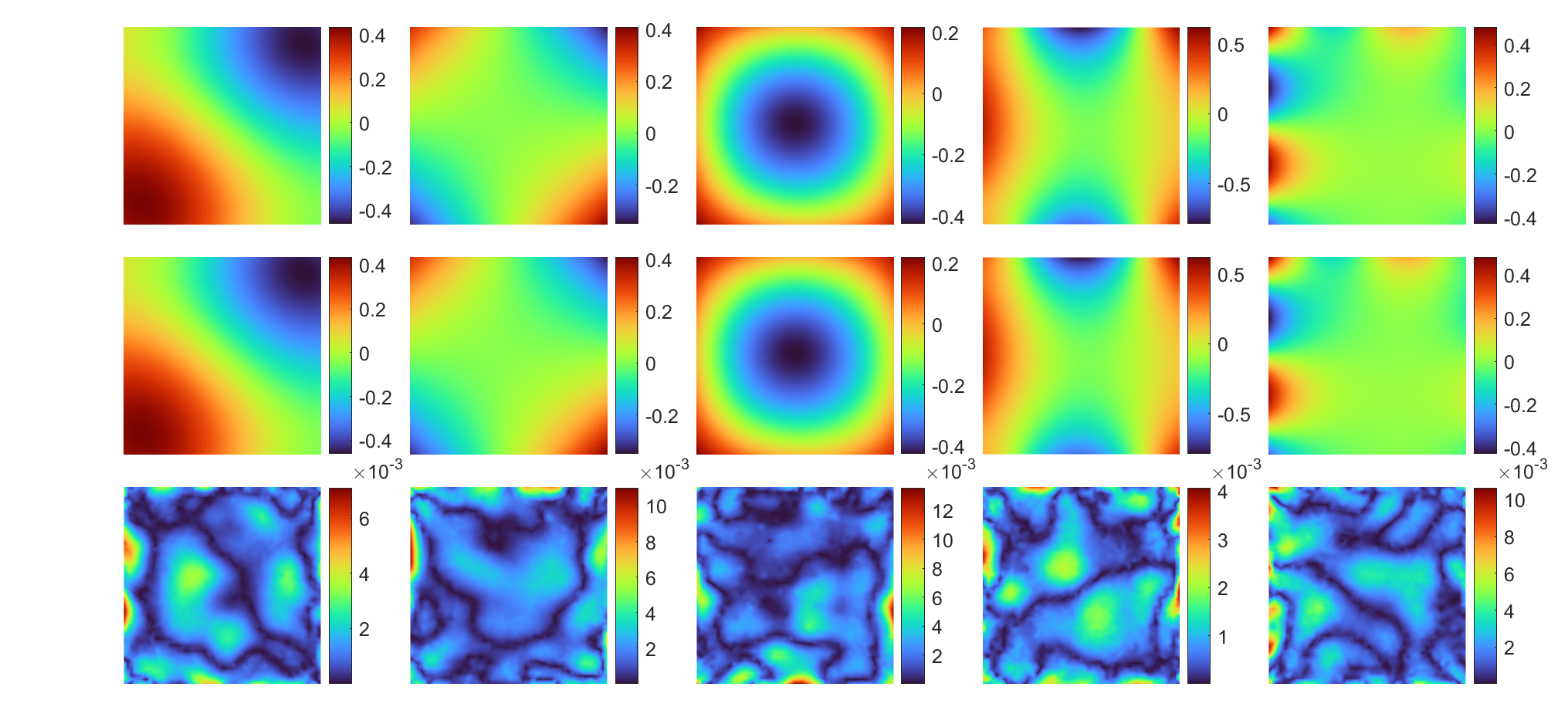}
            \subcaption{Real part of eigenfunctions (top), their projection onto the predicted eigenspace (middle), and relative errors (bottom).}
        \end{minipage} \\
        \begin{minipage}[t]{1\textwidth}
            \centering
            \includegraphics[width=1\textwidth]{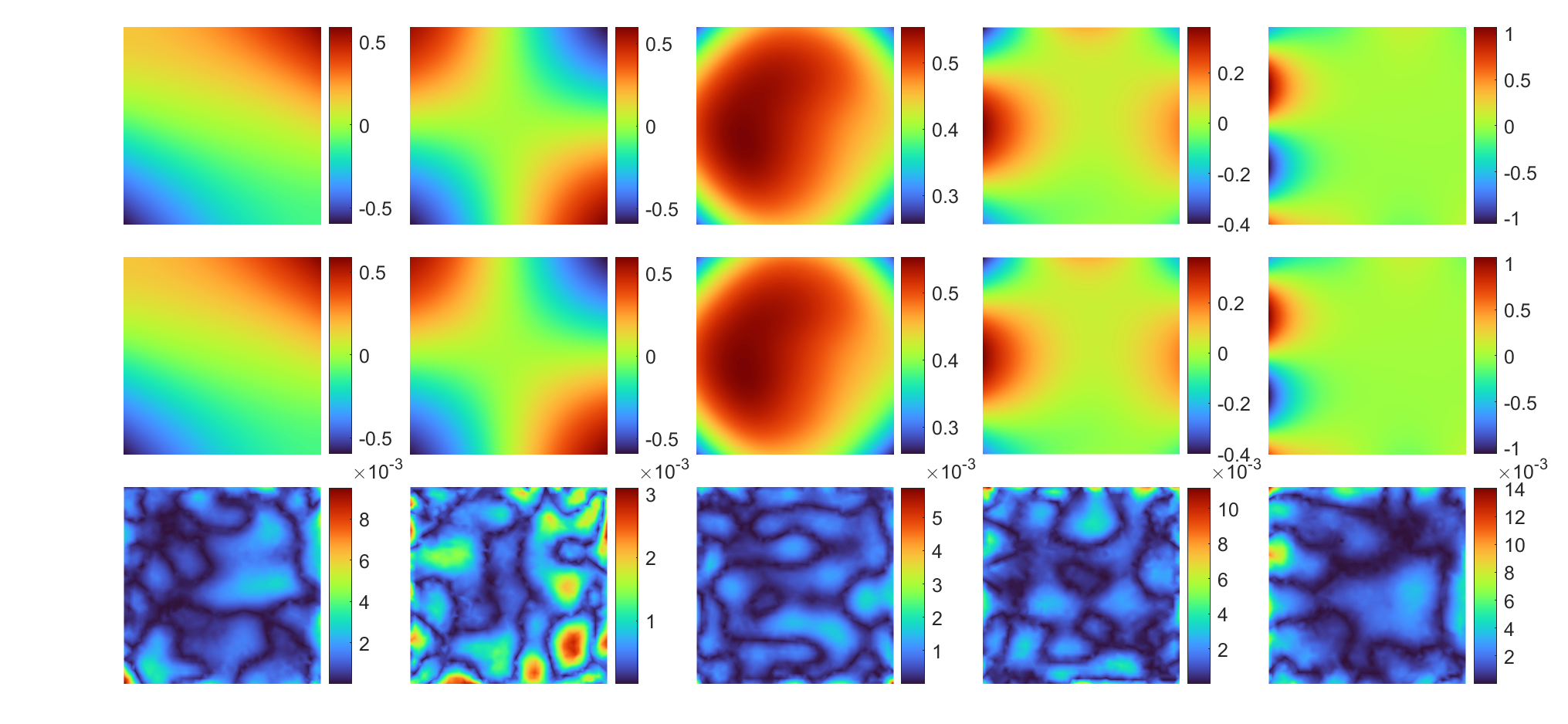}
            \subcaption{Imaginary part of eigenfunctions (top), their projection onto the predicted eigenspace (middle), and relative errors (bottom).}
        \end{minipage} \\
    \caption{Visualization of predicted eigenspace.}
    \label{fig:ef_sample}
    \end{figure}

    \subsection{Robustness Analysis} \label{sec:exp_k_variation}

    In the preceding experiments, the wavenumber was fixed at $k=1$. However, the spectral properties of the Steklov problem depends on $k$. When $k$ is small, the problem can be viewed as a perturbation of the Laplace-Steklov problem, where symmetries in the domain (e.g., the square domain) lead to degenerate eigenvalues that split into tight clusters under the perturbation of $k^2n({x})$. As $k$ increases, these clusters tend to dissolve as the spectrum spreads. More critically, the increase of wavenumber leads to a densification of the spectrum, causing the uniform spectral isolation assumption (Assumption \ref{assump:smallM}) to fail. In this larger frequency regime, it might not exist a contour that encloses exactly the first $K$ eigenvalues for all samples without modes crossing the boundary. Consequently, the mapping from parameters to the exact $K$-dimensional eigenspace is discontinuous due to frequent mode switching.

        To address this challenge, we leverage the concept of subspace embedding \cite{fanaskov2025deep}. Instead of attempting to predict the exact, potentially discontinuous $K$-dimensional target space, we train the network to predict a subspace $\mathcal{U}_{K_{out}}$ such that $K_{out}:=\dim(\mathcal{U}_{K_{out}}) > K$. The rationale is that this larger "ansatz space" can encompass the union of all dominant eigenfunctions that might fluctuate into the range of interest across the parameter distribution. The subsequent Rayleigh-Ritz procedure then serves as a robust filter to extract the desired $K$ eigenpairs.

        We evaluate the model's robustness for $k^2 \in \{0.25, 1, 4, 10\}$. For the lower frequencies ($k^2=0.25, 1$) we set the output dimension equal to the target dimension ($K_{out}=K=12$). For the larger frequencies ($k^2=4, 10$), when mode mixing is expected, we employ the embedding strategy with increased output dimensions of $K_{out}=24$ and $K_{out}=48$, respectively.

        Figure \ref{fig:wave2} illustrates the performance trends of representative metrics (Eigenvalue MAE, Eigenfunction RelL1, and Subspace Chordal Distance) as the wavenumber increases. We observe a general trend of performance degradation across all metrics as $k^2$ grows. This is anticipated, as larger wavenumbers correspond to more oscillatory behavior, which naturally pose greater challenges for both the neural operator and the finite element ground truth generation. However, despite the relative degradation, the absolute errors remain satisfactory even for the challenging $k^2=10$ case, with the eigenvalue MAE maintained around $10^{-2}$.

        To further validate the necessity of the embedding strategy, Figure \ref{fig:k10_dim} details the impact of the output subspace dimension $K_{out}$ on model performance for $k^2=10$. The results show a clear monotonic improvement in accuracy as $K_{out}$ increases from 12 to 48. This empirical evidence confirms that a larger predicted subspace effectively accommodates the "mode switching" phenomenon, capturing necessary spectral information that would otherwise be lost if the network were constrained to a fixed $K$-dimensional output. We note that experiments for significantly larger wavenumbers ($k^2 \gg 10$) are currently omitted, as the standard FEM ground truth generation begins to suffer from non-negligible numerical dispersion errors.

        \begin{figure}[htbp]
            \centering
            \includegraphics[width=1\textwidth]{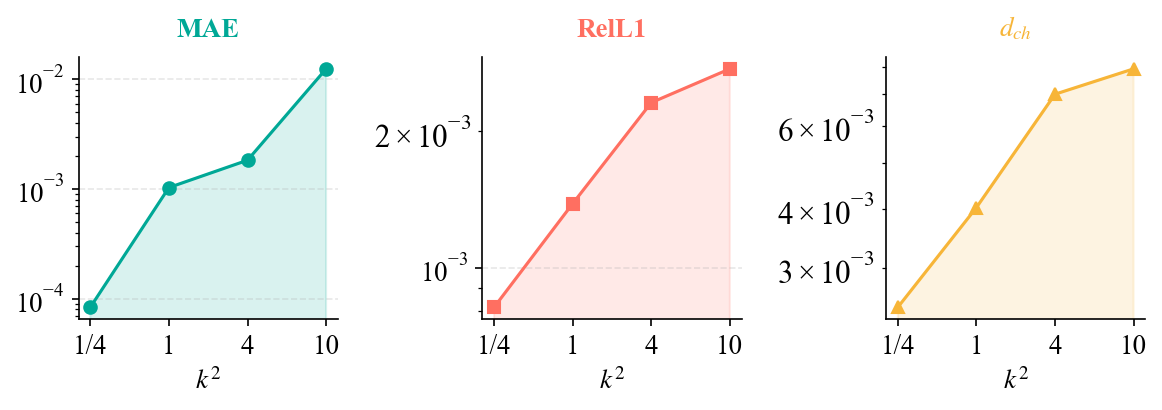}
            \caption{Performance trends of DEN across varying wavenumbers $k^2 \in \{0.25, 1, 4, 10\}$. Metrics shown are Eigenvalue MAE (averaged over top 12), Eigenfunction RelL1 (averaged), and Subspace Chordal Distance $d_{ch}$. Subspace embedding ($K_{out}>12$) is employed for $k^2 \ge 4$.}
            \label{fig:wave2}
        \end{figure}

        \begin{figure}[htbp]
            \centering
            \includegraphics[width=1\textwidth]{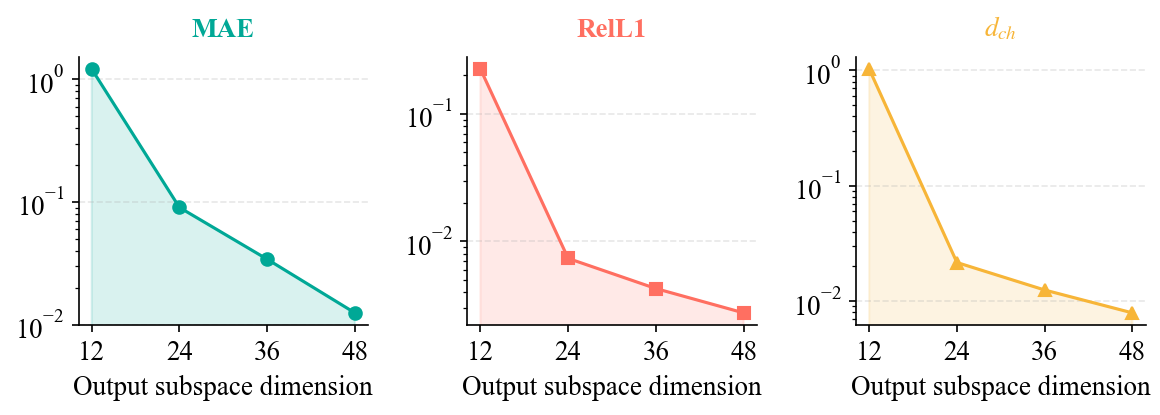}
            \caption{Impact of output subspace dimension $K_{out}$ on model performance for the high-frequency case $k^2=10$. Increasing the embedding dimension effectively mitigates errors caused by spectral crossing.}
            \label{fig:k10_dim}
        \end{figure}

    \subsection{Ablation Study}\label{sec:ablation}
    To rigorously validate the effectiveness of the proposed architectural components, specifically the geometry-adaptive POD basis and the low-rank banded cross-mode mixing, we conduct a series of ablation studies. We compare the full \textbf{DEN} model against several variants and baselines. The results are summarized in Table \ref{tab:ablation}.
    \begin{table}[htbp]
        \centering
        \caption{Ablation study and baseline comparison. \textbf{DEN} denotes the proposed model using $\text{POD}(\mathbf{Y})$ basis and Low-Rank Banded mixing. 
        \textbf{DEN-BFR}: Banded mixing without low-rank constraint. 
        \textbf{DEN-Dense}: Global mixing without banded or low-rank constraints. 
        \textbf{DEN-Diag}: No cross-mode mixing (diagonal only). 
        \textbf{DEN-XY}: Uses joint POD basis of inputs and outputs(equivalent to PODNO with mixing). 
        \textbf{DEN-Lap}: Uses Graph Laplacian eigenfunctions as basis (equivalent to G-FuNK with mixing). 
        \textbf{PODNO/G-FuNK}: Standard baselines without mixing mechanisms. 
        \textbf{FNO}: Standard Fourier Neural Operator with interpolation.}
        \label{tab:ablation}
        \renewcommand{\arraystretch}{1.2}
        \setlength{\tabcolsep}{8pt}
        \begin{tabular}{lcccc}
        \hline
        \textbf{Model} & \textbf{MAE} ($\times 10^{-3}$) & \textbf{RelL1} ($\times 10^{-3}$) & ${d_{ch}}$ ($\times 10^{-3}$) & \textbf{Params} \\ 
        \hline
        \textbf{DEN} & \textbf{1.0324} & \textbf{1.3839} & \textbf{4.0260} & 2,200,536 \\
        DEN-BFR & 1.8937 & 1.5522 & 4.5110 & 2,488,536 \\
        DEN-Dense & 1.0514 & 1.8356 & 5.5449 & 2,488,536 \\
        DEN-Diag & 2.1927 & 2.2502 & 6.5927 & 2,027,736 \\
        \hline
        DEN-XY & 1.0334 & 1.5946 & 4.6279 & 2,200,536 \\
        DEN-Lap & 2.2530 & 2.3331 & 6.8763 & 2,200,536 \\
        \hline
        PODNO & 2.7347 & 2.7719 & 8.1112 & 2,200,536 \\
        G-FuNK & 3.7542 & 3.1976 & 9.3496 & 2,200,536 \\
        FNO & 7.4960 & 4.7606 & 14.0322 & 7,135,305 \\ 
        \hline
        \end{tabular}
    \end{table}

    \paragraph{Impact of Basis Selection}
    We first evaluate the influence of the spectral basis $\Psi$ on the accuracy of the prediction. The results demonstrate that the proposed target-derived basis, $\text{POD}(\mathbf{Y})$, significantly outperforms the geometry-driven graph Laplacian basis (DEN-Lap), reducing the MAE from $2.25$ to $1.03$. While the Laplacian basis generalizes well to the domain geometry, the POD basis optimally encodes the specific energy spectrum of the Steklov eigenfunctions, confirming the theoretical optimality discussed in Section \ref{sec:basis}. We also experimented with a basis derived from the joint input-output snapshot matrix (DEN-XY), which yielded slightly inferior performance compared to using $\text{POD}(\mathbf{Y})$ alone. This indicates that incorporating input features (refractive index) into the basis construction likely introduces unnecessary noise to the output space representation, as the primary objective is to span the solution manifold rather than the parameter manifold. Furthermore, the standard FNO, which necessitates interpolation onto a uniform grid, exhibits the highest error ($7.49$). This sharp contrast underscores the superiority of our mesh-agnostic spectral approach over methods constrained by FFT requirements and interpolation artifacts.

    \paragraph{Importance of Cross-Mode Mixing}
    Next, we investigate the necessity of the proposed cross-mode mixing mechanism. The variant without mixing (DEN-Diag) exhibits a substantial increase in error compared to the full DEN (MAE: $2.19$ vs. $1.03$). Similarly, standard spectral baselines lacking explicit mixing, such as PODNO and G-FuNK, perform significantly worse. These results confirm that the diagonal assumption inherent in standard spectral layers is insufficient for non-self-adjoint operators, for which strong spectral coupling exists. 

    Regarding the efficiency of the mixing strategy, we compared our Low-Rank Banded approach against two denser alternatives: full-rank banded mixing (DEN-BFR) and unconstrained global mixing (DEN-Dense). Interestingly, the proposed DEN outperforms both heavier variants. The unconstrained DEN-Dense likely suffers from optimization difficulties or overfitting due to the excessive parameterization of spectral interactions. By enforcing a banded low-rank structure, DEN not only reduces the parameter count (saving $\sim 0.3$ million parameters compared to the full rank settings) but also imposes a beneficial inductive bias that focuses on physically relevant, local spectral interactions. In summary, the proposed DEN achieves the best trade-off between accuracy and model complexity.

    \section{Conclusions and Discusions}\label{sec:Concl}

    In this work, we considered the spectral inference for parametric non-self-adjoint eigenvalue problems. We proposed DEN, an architecture focusing on mesh adaptability, optimal basis selection in the frequency domain, and explicit cross-mode interactions. To overcome the discontinuity of individual eigenfunctions caused by spectral crossings and non-self-adjointness, we propose subspace learning other than direct eigenfunction regression. By predicting the eigenspace associated to the target eigenvalues and subsequently employing the Rayleigh-Ritz procedure, we robustly recover the eigenvalues and eigenfunctions with high fidelity.

    For a specified spectral range (e.g., the first $k=12$ eigenvalues), we prove that a contour exists such that the resolvent remains bounded for admissible parameters. In addition, we show that the Lipschitz continuous dependence of the eigenspace on the parameters and derive explicit upper bounds for the eigenvalue prediction error.
    Numerical experiments demonstrate that DEN achieves exceptional accuracy in eigenspace prediction. %\textcolor{blue}{Furthermore, DEN exhibits strong zero-shot generalization capabilities across different mesh resolutions, validating its discretization-independent nature.} 
    These results characterize DEN as a fast, accurate, and flexible surrogate solver. A study of the impact of the wavenumber $k$ on the model's performance is carried out, indicating that the proposed method yields highly accurate predictions for small wavenumbers. For larger wavenumbers with increasing spectral instability, satisfactory prediction fidelity is preserved by employing a subspace embedding strategy to accommodate the expanded spectral content.

    The proposed framework holds significant promise for downstream applications, including the construction of enrichment basis functions for Generalized Finite Element Methods (GFEM), Model Order Reduction (MOR) for large-scale PDE systems, and fast forward solvers for inverse spectral problems.

\bibliographystyle{siamplain}
\bibliography{refs}

\end{document}